\documentclass[12pt,reqno]{amsart}
\usepackage{latexsym,amsmath,amssymb, amsthm, mathscinet}
\usepackage{cases, verbatim}

\setlength{\columnseprule}{0.4pt}
\setlength{\topmargin}{-1.5pt}
\setlength{\oddsidemargin}{0.cm}
\setlength{\evensidemargin}{0.cm}
\setlength{\textheight}{22cm}
\setlength{\textwidth}{16.0cm}

\usepackage{amsfonts,amsmath,amsthm, amssymb}
\usepackage{cases}
\usepackage[usenames]{color}
\usepackage{enumerate}

\pagestyle{plain}

\theoremstyle{plain}
\newtheorem{thm}{Theorem}[section]

\newtheorem{defn}{Definition}

\newtheorem{lem}{Lemma}[section]
\newtheorem{cor}[thm]{Corollary}

\newtheorem{prop}{Proposition}[section]
\theoremstyle{remark}
\newtheorem{rem}{\bf{Remark}}
\numberwithin{equation}{section}




\newcommand{\N}{\mathbb{N}}

\newcommand{\R}{\mathbb{R}}

\newcommand{\T}{\mathbb{T}}
\newcommand{\Z}{\mathbb{Z}}


\newcommand{\Li}{L^{\infty}}
\newcommand{\Lip}{{\rm Lip\,}}





\newcommand{\al}{\alpha}

\newcommand{\ep}{\varepsilon}

\newcommand{\sig}{\sigma}


\newcommand{\ul}{\underline}

\newcommand{\Div}{{\rm div}\,}


\begin{document}
\title[Quenching under forced MCF]{Quenching for axisymmetric hypersurfaces under forced mean curvature flows}

\author{Hiroyoshi Mitake}
\address[H. Mitake]{
	Graduate School of Mathematical Sciences,
	University of Tokyo
	3-8-1 Komaba, Meguro-ku, Tokyo, 153-8914, Japan}
\email{mitake@g.ecc.u-tokyo.ac.jp}

\author{Yusuke Oka}
\address[Y. Oka]{
	Graduate School of Mathematical Sciences,
	University of Tokyo
	3-8-1 Komaba, Meguro-ku, Tokyo, 153-8914, Japan}
\email{oka@ms.u-tokyo.ac.jp}

\author{Hung V. Tran}
\address[Hung V. Tran]
{
	Department of Mathematics,
	University of Wisconsin Madison, Van Vleck hall, 480 Lincoln drive, Madison, WI 53706, USA}
\email{hung@math.wisc.edu}

\thanks{
	The work of HM was partially supported by the JSPS grants: KAKENHI \#22K03382, \#21H04431, \#20H01816, \#19H00639.
  The work of YO was partially supported by JST SPRING, Grant Number JPMJSP2108.
	The work of HT was partially supported by  NSF CAREER grant DMS-1843320 and a Vilas Faculty Early-Career Investigator Award.
}

\date{\today}
\keywords{Forced mean curvature flows; axisymmetric hypersurfaces; quenching; non-quenching}
\subjclass[2010]{
	35B45, 
	35K93, 
	53E10, 
}

\begin{abstract}
Here, we study the motion of axisymmetric hypersurfaces $\{\Gamma_t\}_{t\ge0}$ evolved by forced mean curvature flows in the periodic setting. 
We establish conditions that quenching occurs or does not occur in terms of the initial data and forcing term.
We also study the locations where the quenching happens in some special cases.
\end{abstract}

\date{\today}

%
%
%

%

\maketitle


\section{Introduction and Main Theorems}\label{sec:Intro}

In this paper we study the motion of hypersurfaces $\{\Gamma_t\}_{t\ge0}\subset\R^{n+1}$ evolved by a forced mean curvature flow
\[
V=\kappa+f \quad\text{on} \ \Gamma_t,
\]
where $V$ and $\kappa$ denote the normal velocity and $n$ times mean curvature of the hypersurfaces, respectively,
and $f:\R^{n+1}\to\R$ is a given continuous function.

We always consider the case that $\Gamma_t$ is axisymmetric, that is, $\Gamma_t$ is described by
\[
\Gamma_t=\{(x, y)\in\R\times\R^n\mid |y|=u(x,t)\},
\]
where $u:\R\times[0,\infty)\to [0,\infty)$ is a function.
Let us first derive an equation which $u$ satisfies.
Set
\[
r:=|y| \quad \text{and}\quad
\phi(x,y,t):=u(x,t)-r.
\]
Then,
$\Gamma_t=\{(x, y)\in\R\times\R^n\mid \phi(x,y,t)=0\}$.
Note that, for $y\neq 0$,
\[
D\phi=(u_x, D_y\phi)=\left(u_x,-\frac{y}{|y|}\right).
\]
By an elementary computation, we have, for $y\neq 0$,
\[
V=\frac{\phi_t}{|D\phi|}=\frac{u_t}{((u_x)^2+1)^{1/2}}.
\]
Moreover, for $y\neq 0$,
\begin{align*}
\kappa
=&\,
\Div\left(\frac{D\phi}{|D\phi|}\right)
=
\Div\left(\frac{(u_x, D_y\phi)}{((u_x)^2+1)^{1/2}}\right)\\
=&\,
\frac{\partial}{\partial x}\left(\frac{u_x}{((u_x)^2+1)^{1/2}}\right)+\Div_y\left(\frac{D_y\phi}{((u_x)^2+1)^{1/2}}\right)
=
\frac{u_{xx}}{((u_x)^2+1)^{3/2}}+\Div_y\left(\frac{-\frac{y}{|y|}}{((u_x)^2+1)^{1/2}}\right)\\
=&\,
\frac{u_{xx}}{((u_x)^2+1)^{3/2}}-\frac{n-1}{((u_x)^2+1)^{1/2}|y|}
=\frac{u_{xx}}{((u_x)^2+1)^{3/2}}-\frac{n-1}{((u_x)^2+1)^{1/2}u}.
\end{align*}
We always suppose that $f$ is independent of the $y$-variable, that is,
\[
f(x,y)=f(x) \qquad\text{for all} \ (x,y)\in \R\times\R^n.
\]
Then,
\begin{equation}\label{eq:MC-rot}
\left\{
\begin{array}{ll}
\displaystyle
u_t=\frac{u_{xx}}{(u_x)^2+1}-\frac{n-1}{u}+f(x)\sqrt{(u_x)^2+1} \quad & \text{for} \ x\in\R, \ t>0, \\
u(\cdot,0)=g, & \text{for} \ x\in\R,
\end{array}
\right.
\end{equation}
where $g:\R\to(0,\infty)$ is a given continuous function.

\medskip
Now, our interest is to see if the quenching phenomenon occurs, that is, whether there exists $(x_{0},T)\in\R\times\left(0,\infty\right)$ such that
\[
u(x_0,T)=0
\]
or not.
We define
\begin{equation}\label{def:T}
T^\ast=T^\ast(f,g)
:=
\sup\big\{t>0\mid \inf_{x\in\R}u(x,t)>0\big\}\in(0,\infty],
\end{equation}
and we call $T^\ast$ the \textit{quenching time} for \eqref{eq:MC-rot}.
We say that quenching occurs if $T^{\ast}<\infty$, and does not occur if $T^{\ast}=\infty$.
We investigate this phenomenon in the periodic setting, and thus we \textit{always} assume throughout the paper
\begin{itemize}
\item[(A1)]
$f\in C^1(\T, (0,\infty))$, and $g\in C^2(\T, (0,\infty))$,
where we denote by $\T:=\R/\Z$ the one dimensional flat torus.
\end{itemize}
If a function $h:\R\to\R$ is $\Z$-periodic, we can think of $h$ as a function from $\T$ to $\R$ as well, and vise versa.
From now on, we switch freely between the two interpretations.
In this paper, we want to study the quenching phenomenon in terms of the competition between the forcing term $f(x)$, and the mean curvature of the initial surface, we introduce a parameter $\ep>0$ as follows:
\begin{equation}\label{eq:periodic}
\left\{
\begin{array}{ll}
\displaystyle
u^\ep_t=\frac{u^\ep_{xx}}{(u^\ep_x)^2+1}-\frac{n-1}{u^\ep}
+\frac{1}{\ep}f(x)\sqrt{(u^\ep_x)^2+1} & \text{for} \ x\in \T, \ t\in(0, T_\ep^\ast), \\
u^\ep(x,0)=\ep^\alpha g(x) & \text{for} \ x\in\T,
\end{array}
\right.
\end{equation}
where $\alpha\in(0,\infty)$ is a given constant, and $T_\ep^\ast:=\sup\left\{t>0\mid \inf_{x\in\T}u^\ep(x,t)>0\right\}\in(0, \infty]$ is the quenching time for \eqref{eq:periodic}.
Basically, we scale the original forcing term by $\ep^{-1}$ and the original initial data by $\ep^{\alpha}$ and aim at observing the quenching behavior of \eqref{eq:periodic} in light of the scaling factors. 

Naively, for small $\ep>0$, if $\alpha>1$, then  the curvature term is stronger than the forcing term, which means that we can expect the quenching happens for small $\ep>0$. 
On the other hand if $0<\alpha<1$, then the curvature term is weaker than the forcing term, and we can expect the quenching never happens. 
Indeed we are able to justify this intuition by using some ODE analysis.

\begin{prop}\label{prop:1}
Assume {\rm(A1)}.
Let $\ep, \alpha>0$, and $u^\ep$ be the solution to \eqref{eq:periodic}, and let us denote $T_\ep^\ast$ be the quenching time for \eqref{eq:periodic}.
There exists $\ep_0>0$ such that for all $\ep\in(0,\ep_0)$ if $\alpha>1$, then $T_\ep^\ast<\infty$, and if $\alpha<1$, then $T_\ep^\ast=\infty$.
\end{prop}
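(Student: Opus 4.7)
The plan is to sandwich $u^\ep$ between two spatially-constant ODE solutions via the parabolic comparison principle. Set $M:=\max_\T f$, $m:=\min_\T f$, $G:=\max_\T g$, $\gamma:=\min_\T g$. For any $C^1$ function $v=v(t)>0$ (independent of $x$), plugging $v$ into the operator on the right-hand side of \eqref{eq:periodic} gives
\[
v'(t)-\Bigl[\frac{v_{xx}}{v_x^2+1}-\frac{n-1}{v}+\frac{f(x)}{\ep}\sqrt{v_x^2+1}\Bigr]=v'(t)+\frac{n-1}{v}-\frac{f(x)}{\ep}.
\]
Consequently, replacing $f(x)$ by $M$ (resp.\ $m$) in the ODE $v'=-(n-1)/v+f/\ep$ produces a classical super- (resp.\ sub-)solution of \eqref{eq:periodic} as long as it remains positive.

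Case $\alpha>1$. I would let $\bar u_\ep(t)$ solve $\bar u_\ep'=-(n-1)/\bar u_\ep+M/\ep$ with $\bar u_\ep(0)=\ep^\alpha G$. By the above $\bar u_\ep$ is a supersolution and $\bar u_\ep(0)\geq u^\ep(\cdot,0)$, so parabolic comparison gives $u^\ep\leq\bar u_\ep$ on $\T\times[0,T_\ep^\ast)$. The unique positive equilibrium of the ODE is $(n-1)\ep/M$; when $\alpha>1$ and $\ep$ is small we have $\ep^\alpha G<(n-1)\ep/M$, so $\bar u_\ep$ is strictly decreasing. Multiplying the ODE by $\bar u_\ep$ and using $\bar u_\ep(t)\leq\ep^\alpha G$,
\[
\tfrac12(\bar u_\ep^2)'(t)=-(n-1)+\tfrac{M}{\ep}\bar u_\ep(t)\leq -(n-1)+MG\ep^{\alpha-1}\leq -\tfrac{n-1}{2}
\]
for $\ep$ sufficiently small. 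Integrating, $\bar u_\ep$ reaches $0$ by time $\ep^{2\alpha}G^2/(n-1)$, forcing $T_\ep^\ast<\infty$.

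Case $\alpha<1$. I would let $\underline u_\ep(t)$ solve $\underline u_\ep'=-(n-1)/\underline u_\ep+m/\ep$ with $\underline u_\ep(0)=\ep^\alpha\gamma$. By the same identity $\underline u_\ep$ is a subsolution and $\underline u_\ep(0)\leq u^\ep(\cdot,0)$, so comparison yields $u^\ep\geq\underline u_\ep$. The ODE equilibrium is $(n-1)\ep/m$; for $\alpha<1$ and $\ep$ small we have $\ep^\alpha\gamma>(n-1)\ep/m$, hence $\underline u_\ep$ is strictly increasing and stays above $(n-1)\ep/m>0$. Therefore $u^\ep(x,t)$ is bounded below by a positive constant for all $t$, so $T_\ep^\ast=\infty$.

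The main obstacle will be justifying the comparison step rigorously: equation \eqref{eq:periodic} is quasilinear parabolic on $\T$ with a singular reaction $-(n-1)/u$. On any slab $\T\times[0,T_\ep^\ast-\delta]$ both $u^\ep$ and the barrier are bounded below by a positive constant, so the equation is uniformly parabolic with smooth coefficients and the classical parabolic comparison principle applies between the smooth sub/supersolution and $u^\ep$; a routine limit $\delta\to 0^+$ extends the bound to $[0,T_\ep^\ast)$. The remaining ODE phase-line analysis is straightforward.
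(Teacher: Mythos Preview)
Your proposal is correct and follows essentially the same approach as the paper: both construct spatially constant barriers by solving the ODE $\dot y=-(n-1)/y+a/\ep$ with $a=\max_\T f$ or $a=\min_\T f$ and initial data $\ep^\alpha\max_\T g$ or $\ep^\alpha\min_\T g$, then invoke the comparison principle. Your version is slightly more detailed (you phrase the monotonicity via the equilibrium $(n-1)\ep/a$, extract the quantitative quenching time bound $\ep^{2\alpha}G^2/(n-1)$ via the identity $\tfrac12(\bar u_\ep^2)'=-(n-1)+(M/\ep)\bar u_\ep$, and explicitly address the comparison principle near the singularity), but the core argument is the same.
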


See Section \ref{sec:pfthm1} for the proof of Proposition \ref{prop:1}.
The idea of constructing sub/supersolutions to imply the lower/upper bound for the sets enclosed by $\Gamma_t$ is natural. See \cite{GMT, Su, An, Gr} for instance.

\medskip
In the critical case $\alpha=1$, the situation is much more subtle.
The quenching/non-quenching depends on the interactions between the initial data and forcing term.
Here is one of our main theorems.

\begin{thm}\label{thm:1}
Assume {\rm(A1)} holds.
Assume $\al=1$.
Let $u^{\ep}$ be the solution to \eqref{eq:periodic}.
\begin{enumerate}[{\rm(a)}]
\item
If $\min_{x\in\T}f(x)g(x)>n-1$, then there exist $\ep_0>0$ and $c_0>0$ such that
\[
u^\ep(x,t)\ge \ep c_0 \quad\text{for all} \ x\in\T, \ t\in[0,\infty), \ \ep\in(0,\ep_0).
\]
In short, for $\ep\in(0,\ep_0)$, the quenching does not happen, that is $T^\ast_\ep=\infty$.
\item
If $\max_{x\in\T}f(x)\max_{x\in\T}g(x)<n-1$, then for all $\ep>0$, the quenching happens, that is $T^{\ast}_{\ep}<\infty$.
\end{enumerate}
\end{thm}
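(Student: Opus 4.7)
The plan is to prove each half by constructing an explicit sub- or super-solution to \eqref{eq:periodic} and invoking a parabolic comparison principle on $\T$. For (a), the natural candidate is a \emph{time-independent} subsolution of the form $w^\ep(x)=\ep v(x)$; for (b), the dual construction is a \emph{spatially constant} supersolution $w^\ep(x,t)=\phi(t)$ that vanishes in finite time. In both cases the threshold $n-1$ emerges by balancing the singular reaction $-(n-1)/u^\ep$ against the forcing $f(x)/\ep$.

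For (a), the hypothesis $\min_\T(fg)>n-1$ rephrased reads $g(x)>(n-1)/f(x)$ uniformly on the compact torus. Hence one can fix $\delta>0$ and a function $v\in C^2(\T,(0,\infty))$---obtained by a brief mollification if necessary, since $f$ is only $C^1$---with
\[
\frac{n-1}{f(x)}<v(x)\le g(x),\qquad \frac{n-1}{v(x)}-f(x)\le -c_1<0\quad\text{for every } x\in\T.
\]
Plugging $w^\ep=\ep v$ into the operator of \eqref{eq:periodic} and multiplying by $\ep$ yields
\[
-\frac{\ep^2 v''(x)}{\ep^2(v'(x))^2+1}+\frac{n-1}{v(x)}-f(x)\sqrt{\ep^2(v'(x))^2+1}=\Bigl(\frac{n-1}{v(x)}-f(x)\Bigr)+O(\ep^2),
\]
so for all sufficiently small $\ep\in(0,\ep_0)$ the expression is strictly negative, i.e., $w^\ep$ is a classical stationary subsolution. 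Since $w^\ep(x,0)=\ep v(x)\le \ep g(x)=u^\ep(x,0)$, comparison gives $u^\ep(x,t)\ge \ep v(x)\ge \ep c_0$ with $c_0:=\min_\T v>0$, which is exactly (a).

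For (b), set $M_f:=\max_\T f$, $M_g:=\max_\T g$, and fix any $\ep>0$ (the conclusion is for \emph{every} $\ep$). Since $M_f M_g<n-1$ one can pick $\phi_0\in(\ep M_g,\,\ep(n-1)/M_f)$, and let $\phi=\phi(t)$ solve, while positive, the ODE
\[
\phi'(t)=-\frac{n-1}{\phi(t)}+\frac{M_f}{\ep},\qquad \phi(0)=\phi_0.
\]
The interval $(0,\ep(n-1)/M_f)$ is forward invariant, $\phi$ is decreasing there, and
\[
(\phi^2)'(t)=-2(n-1)+\frac{2M_f\phi(t)}{\ep}\le -2\Bigl(n-1-\frac{M_f\phi_0}{\ep}\Bigr)=:-\eta<0,
\]
so $\phi$ reaches zero at some $T_\phi\le \phi_0^2/\eta<\infty$. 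Because $\phi_x=\phi_{xx}=0$, one has
\[
\phi'(t)+\frac{n-1}{\phi(t)}-\frac{f(x)}{\ep}\ge \phi'(t)+\frac{n-1}{\phi(t)}-\frac{M_f}{\ep}=0,
\]
so $w^\ep(x,t):=\phi(t)$ is a supersolution of \eqref{eq:periodic}. Since $w^\ep(x,0)=\phi_0>\ep M_g\ge \ep g(x)=u^\ep(x,0)$, comparison gives $u^\ep\le \phi$ on the common interval of positivity, forcing $T^\ast_\ep\le T_\phi<\infty$.

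The step I expect to require the most care is the \emph{comparison principle} for the quasilinear operator in \eqref{eq:periodic} with its singular term $-(n-1)/u^\ep$. In both applications, however, the two functions being compared remain strictly bounded away from zero on the relevant time interval (by $\ep\min_\T v$ in (a), and by the positivity of $\phi$ up to $T_\phi$ in (b)), so the singular reaction is smooth and bounded on the comparison region, and a standard parabolic maximum principle---or equivalently a viscosity doubling-of-variables argument of the same flavor used to prove Proposition~\ref{prop:1}---applies without additional obstacle.
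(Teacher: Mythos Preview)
Your proposal is correct and follows essentially the same approach as the paper: for (a) the paper also builds a stationary subsolution of the form $\ep\psi(x)$ with the explicit choice $\psi(x)=\frac{n-1}{f(x)-\delta}$ (your generic $v$ with $(n-1)/v-f\le -c_1$ is the same idea), and for (b) the paper likewise uses the spatially constant supersolution solving $\dot y=-\tfrac{n-1}{y}+\tfrac{\max_\T f}{\ep}$ with $y(0)=\ep\max_\T g$. Your $(\phi^2)'\le -\eta$ computation makes the finite-time extinction in (b) more explicit than the paper's reference to Proposition~\ref{prop:1}, and your remark about mollifying to get $v\in C^2$ is a welcome touch of care (in fact $v=g$ already works since $g\in C^2(\T)$ by (A1)).
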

Except for the cases in Theorem \ref{thm:1}  (a), (b), it is not clear yet to see whether the quenching/non-quenching happens.

We introduce a notion of the quenching in the limit of $\ep$.
\begin{defn}
  Assume $\al=1$.
  Let $u^{\ep}$ be the solution to \eqref{eq:periodic}.
If there exist $\{\ep_k\}_{k\in\N}$ with $\ep_k\to 0$ as $k\to\infty$, $(x_k, t_k)\in\T\times(0,\infty)$ such that
\[
\frac{u^{\ep_{k}}(x_k, t_k)}{\ep_k}\to 0 \quad\text{as} \ k\to\infty,
\]
then we say that an {\rm$\ep$-limit quenching} happens.
\end{defn}

\begin{thm}\label{thm:2}
Assume {\rm(A1)} holds.
Assume $\al=1$.
Let $u^{\ep}$ be the solution to \eqref{eq:periodic}.
If $\max_{x\in\T}f(x)g(x)<n-1$, then an $\ep$-limit quenching happens.
\end{thm}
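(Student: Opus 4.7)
The plan is to rescale time so that the equation for $u^\ep$ becomes, in the formal limit $\ep\to 0$, a pointwise ODE in $x$. Setting $v^\ep(x,s):=u^\ep(x,\ep^2 s)/\ep$, a direct computation shows
\[
\pl_s v^\ep = \frac{\ep^2\, v^\ep_{xx}}{1+\ep^2 (v^\ep_x)^2}-\frac{n-1}{v^\ep}+f(x)\sqrt{1+\ep^2 (v^\ep_x)^2},\quad v^\ep(x,0)=g(x),
\]
whose formal limit is the one-parameter family of ODEs $\pl_s W=f(x)-(n-1)/W$ with $W(x,0)=g(x)$. The hypothesis $\max_\T fg<n-1$ yields $g(x)<(n-1)/f(x)$ for every $x$, so $W(x,\cdot)$ is strictly decreasing and quenches at the finite time
\[
T_*(x):=\int_0^{g(x)}\frac{r\,dr}{(n-1)-f(x)r}.
\]
Since $T_*$ is continuous on the compact $\T$, $T_{\min}:=\min_{x\in\T}T_*(x)$ is attained at some $x_*\in\T$.

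Given $\nu>0$, I would pick $T'\in(0,T_{\min})$ with $W(x_*,T')<\nu/4$, and then work with a perturbed ODE to avoid the singularity: for $\eta\in(0,\eta_0)$ with $\eta_0:=\min_x[(n-1)/f(x)-g(x)]>0$, let $W^\eta$ solve the same ODE with $W^\eta(x,0)=g(x)+\eta$. For $\eta$ sufficiently small, $W^\eta(x_*,T')<\nu/2$ and $m_\eta:=\min_{\T\times[0,T']}W^\eta>0$, and $W^\eta$ is smooth in $x$ with uniform $C^2$-bounds on $[0,T']$. The key step is to upgrade $W^\eta$ to a supersolution of the PDE for $v^\ep$: the natural candidate $W^\eta$ itself fails by an $O(\ep^2)$ defect coming from the curvature and diffusion terms, which I would correct by setting
\[
V^\ep(x,s):=W^\eta(x,s)+\ep^2\lambda(s).
\]
A direct expansion of $\pl_s V^\ep$ minus the right-hand side of the PDE shows that $V^\ep$ is a classical supersolution once $\lambda$ satisfies a linear ODE of the form $\lambda'(s)\ge A\lambda(s)+B$, $\lambda(0)=0$, with $A,B$ depending on $m_\eta^{-1}$ and the $C^2$-norm of $W^\eta$ on $[0,T']$; the explicit choice $\lambda(s)=(B/A)(e^{As}-1)$ works. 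Since $V^\ep(x,0)=g(x)+\eta\ge g(x)=v^\ep(x,0)$, the parabolic comparison principle gives $v^\ep\le V^\ep$ on $\T\times[0,T']$, whence
\[
v^\ep(x_*,T')\le W^\eta(x_*,T')+\ep^2\lambda(T')<\frac{\nu}{2}+C_{\nu,\eta}\,\ep^2<\nu
\]
for $\ep<\ep_*(\nu,\eta)$. A diagonal extraction $\nu_k\to 0$ with corresponding $T_k$, $\eta_k$, and $\ep_k\to 0$, together with the choice $(x_k,t_k):=(x_*,\ep_k^2 T_k)\in\T\times(0,\infty)$, then yields $u^{\ep_k}(x_k,t_k)/\ep_k<\nu_k\to 0$, which is the claimed $\ep$-limit quenching.

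The main obstacle is that the constants $A,B$, and hence $C_{\nu,\eta}$, depend on $m_\eta^{-1}$ and on $x$-derivatives of $W^\eta$ that blow up as $\eta\to 0$ or $T'\to T_{\min}$; indeed $W$ itself develops a singularity at $s=T_{\min}$. This forces the order of quantifiers above: one must first fix $\nu$, then $T'<T_{\min}$, then $\eta>0$ small (which pins down finite but possibly large $A$, $B$), and only at the end choose $\ep$ small enough that $\ep^2\lambda(T')<\nu/2$. A minor technical point is that parabolic comparison requires $v^\ep>0$ on $[0,T']$; should $v^\ep$ quench in rescaled time before $T'$, the $\ep$-limit quenching conclusion follows even more directly.
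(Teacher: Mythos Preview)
Your route is genuinely different from the paper's and more direct. The paper argues by contradiction: assuming a uniform lower bound $u^\ep\ge\ep c_*$, it first obtains a priori bounds on $\ep u_t^\ep$ and $u_x^\ep$ via the Bernstein method (Propositions~\ref{prop:est1}--\ref{prop:est2}, Corollary~\ref{cor:apriori}), then takes half-relaxed limits of $w^\ep(x,t)=u^\ep(x,\ep^2 t)/\ep$, identifies them through a comparison lemma for the limiting ODE (Lemma~\ref{lem:ODE-vw}), and contradicts the lower bound because the ODE quenches. Your barrier $V^\ep=W^\eta+\ep^2\lambda$ bypasses both the Bernstein estimates and the half-relaxed-limit machinery, and as a bonus locates the $\ep$-limit quenching near the minimizer $x_*$ of $T_*(\cdot)$.

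There is, however, a regularity gap. Assumption (A1) gives only $f\in C^1(\T)$, so the parametrized ODE solution $W^\eta(\cdot,s)$ is $C^1$ in $x$ but in general not $C^2$: differentiating $\partial_s W^\eta=f(x)-(n-1)/W^\eta$ twice in $x$ brings in $f''$. Thus $V^\ep$ is not a classical supersolution and the constant $B$ you pick to dominate $\|W^\eta_{xx}\|_\infty$ is undefined. This is repairable: mollify $W^\eta$ in $x$ at scale $\delta$, which introduces an $O(\delta)$ defect in the ODE while making the second $x$-derivative $O(\delta^{-1})$; the corrector then satisfies $\ep^2\lambda(T')=O(\delta+\ep^2\delta^{-1})$, and the choice $\delta\sim\ep$ keeps $V^\ep(x_*,T')\to W^\eta(x_*,T')$. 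Alternatively, strengthen (A1) to $f\in C^2$. A second point you skate over: the zeroth-order term $-(n-1)/v$ is \emph{increasing} in $v$, so the equation is not proper and the textbook parabolic comparison principle does not apply as stated. One needs the Lipschitz variant (exactly the mechanism behind Lemma~\ref{lem:ODE-vw}): on any interval where $v^\ep,V^\ep\ge m>0$, the nonlinearity is Lipschitz in $v$ with constant $(n-1)/m^2$, and the maximum principle applied to $e^{-Ls}(v^\ep-V^\ep)$ with $L>(n-1)/m^2$ yields $v^\ep\le V^\ep$. For fixed $\ep$ with $v^\ep>0$ on $[0,T']$ such an $m$ exists by compactness, so this is routine---but it should be said explicitly rather than folded into ``parabolic comparison principle''.
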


To prove Theorem \ref{thm:2}, a priori estimate on the derivatives of the solution is essential,
which is established in Corollary \ref{cor:apriori} based on the Bernstein method.

We finally study the locations where the quenching happens under an additional symmetric condition:
\begin{itemize}
\item[(A2)]
$f(x)=f(1-x)$, $g(x)=g(1-x)$ for all $x\in\left[0,1\right]$, and
$f'(x)\ge0$, $g'(x)\ge0$ for all $x\in[0,\frac{1}{2}]$ with $g(0)<g(\frac{1}{2})$.
\end{itemize}
We consider \eqref{eq:periodic} only in the case $\ep=1$ for simplicity.
Then, \eqref{eq:periodic} becomes \eqref{eq:MC-rot}.
We establish the following result.
\begin{thm}\label{thm:3}
Assume {\rm(A1), (A2)} hold, and $\max_{\T}f \max_{\T}g<n-1$.
Let $u$ be the solution to \eqref{eq:MC-rot}.
Then it holds that
\begin{align*}
&\lim_{t\to (T^{\ast})^{-}} u(k,t)=0 \quad\text{for all} \ k\in\Z, \\
&\lim_{t\to (T^{\ast})^{-}} u(x,t)>0 \quad\text{for all} \ x\in\R\setminus\Z.
\end{align*}
\end{thm}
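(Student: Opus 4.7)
The plan is to use the reflective symmetries imposed by (A2) to reduce the problem to $[0,\tfrac12]$, establish the monotonicity of $u(\cdot,t)$ there (which pins the spatial minimum to the integer points), and then rule out quenching at non-integer points via a stationary subsolution argument.

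First I establish symmetry and monotonicity. By the uniqueness of smooth solutions to \eqref{eq:MC-rot} applied to $(x,t)\mapsto u(1-x,t)$, (A2) forces $u(1-x,t)=u(x,t)$; combined with $\Z$-periodicity this yields $u(-x,t)=u(x,t)$, and so $u_x(k,t)=u_x(k+\tfrac12,t)=0$ for every $k\in\Z$. Setting $w:=u_x$ and differentiating \eqref{eq:MC-rot} produces the quasilinear parabolic equation
\[
w_t=\frac{w_{xx}}{w^2+1}-\frac{2w\,w_x^{2}}{(w^2+1)^{2}}+\frac{(n-1)w}{u^2}+f'(x)\sqrt{w^2+1}+\frac{f(x)\,w\,w_x}{\sqrt{w^2+1}}
\]
on $[0,\tfrac12]\times[0,T^{\ast})$, with homogeneous Dirichlet data, non-negative initial data $g'$, and inhomogeneous source $f'(x)\sqrt{w^2+1}\ge 0$. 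At an interior first zero $(x_0,t_0)$ of $w$, the relations $w=w_x=0$ and $w_{xx}\ge 0$ reduce the equation to $w_t=w_{xx}+f'(x_0)\ge 0$, which contradicts the min condition $w_t\le 0$ whenever $f'(x_0)>0$; approximating $f$ by strictly increasing perturbations handles the equality case and yields $u_x\ge 0$ on $[0,\tfrac12]\times[0,T^{\ast})$. Consequently $\min_{\T}u(\cdot,t)=u(0,t)=u(k,t)$ for every $k\in\Z$, and Theorem \ref{thm:1}(b) together with $\min_{\T}u(\cdot,t)\to 0$ as $t\to(T^{\ast})^-$ gives $\lim_{t\to T^{\ast}}u(k,t)=0$.

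For the second assertion I argue by contradiction: if $\liminf_{t\to T^{\ast}}u(x_0,t)=0$ for some $x_0\in(0,\tfrac12]$, then monotonicity and the Lipschitz estimate from Corollary \ref{cor:apriori} force $u(\cdot,t_n)\to 0$ uniformly on $[0,x_0]$ along some sequence $t_n\to T^{\ast}$. To reach a contradiction I would construct a positive $C^2$ stationary subsolution $\varphi$ of \eqref{eq:MC-rot} on an interval $[a,1-a]\ni\tfrac12$ with $a<x_0$ and $\varphi\le g$ there; the comparison principle then gives $u(x,t)\ge\varphi(x)>0$ for all $x\in[a,1-a]$ and all $t\in[0,T^{\ast})$, contradicting the uniform vanishing of $u(\cdot,t_n)$.

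The main obstacle is the construction of such a $\varphi$. Since $\max_{\T}f\max_{\T}g<n-1$, no constant subsolution $\varphi\equiv c$ can work: the requirement $c\ge(n-1)/f(x)$ combined with $c\le g(x)$ would force $f(x)g(x)\ge n-1$. One must therefore engineer a non-constant convex profile, with $\varphi'(\tfrac12)=0$ and $\varphi''(\tfrac12)\ge(n-1)/\varphi(\tfrac12)-f(\tfrac12)>0$, that lies below $g$ on $[a,1-a]$. Balancing $\varphi''/(\varphi'^{2}+1)$ against $-(n-1)/\varphi+f(x)\sqrt{1+\varphi'^{2}}$ using the Bernstein gradient bound and the strict inequality $g(0)<g(\tfrac12)$ from (A2) is the technical heart of the argument, as is the matching of boundary values at $x=a$ and $x=1-a$ so that the comparison principle may be applied on $[a,1-a]$.
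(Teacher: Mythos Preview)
Your reduction to $[0,\tfrac12]$ and the monotonicity claim $u_x\ge 0$ there are correct and parallel the paper's Lemma~4.1 (the paper uses the weak and then the strong maximum principle rather than a first-zero/approximation argument, but the content is the same). The conclusion $\lim_{t\to (T^\ast)^-}u(k,t)=0$ then follows as you indicate.

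The genuine gap is in your treatment of non-quenching at $x_0\in(0,\tfrac12]$. A stationary subsolution $\varphi$ on $[a,1-a]$ cannot be used for comparison without the lateral boundary inequality $\varphi(a)\le u(a,t)$ for every $t\in[0,T^\ast)$. You have chosen $a<x_0$, and under your contradiction hypothesis $u(x_0,t_n)\to 0$; by the monotonicity you just proved, $u(a,t_n)\le u(x_0,t_n)\to 0$ as well. Hence either $\varphi(a)>0$ and the boundary inequality fails along $t_n$, or $\varphi(a)=0$ and the term $-(n-1)/\varphi$ is singular at $a$, so $\varphi$ cannot be the positive $C^2$ subsolution you promise. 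There is no escape via a periodic subsolution either: if $\varphi\in C^2(\T)$ is a stationary subsolution, then at its maximum $\varphi'=0$, $\varphi''\le 0$ force $\varphi\ge (n-1)/f>g$ there, so $\varphi\le g$ is impossible under the hypothesis $\max_\T f\,\max_\T g<n-1$. The ``convex profile'' you sketch does not resolve this circularity: any positive boundary value at $x=a$ presupposes exactly the lower control on $u(a,\cdot)$ that you are trying to establish.

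The paper avoids this by comparing at the level of $v=u_x$ rather than $u$. Once $v\ge 0$ is known, the zero-order term $(n-1)v/u^2$ in the equation for $v$ is nonnegative and may be dropped, leaving a differential inequality with no singular dependence on $u$. The paper then produces the explicit subsolution
\[
w(x,t)=c_0\,e^{-M(t-t_0)}\sin\!\left(\frac{\pi(x-a)}{b-a}\right)
\]
for that inequality on any $[a,b]\subset(0,\tfrac12)$; because $w$ vanishes at $a$ and $b$, the lateral boundary condition $w\le v$ is automatic. Integrating the resulting lower bound on $u_x$ from $a$ to $x$ gives $u(x,t)\ge u(a,t)+\alpha\ge \alpha>0$ for $x\in[a^\ast,b^\ast]$ and all $t<T^\ast$, with no a~priori information on $u(a,t)$ needed. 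This passage through the derivative is precisely the idea missing from your proposal.
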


\begin{rem} 
Note that instead of the periodic setting, we can also consider \eqref{eq:periodic} in $(0,1)\times (0,\infty)$ with the homogeneous Neumann boundary condition $u^\ep_x(0,t) =u^\ep_x(1,t)=0$. All of our main results (Proposition \ref{prop:1} and Theorems \ref{thm:1}--\ref{thm:3}) hold true in this setting. We refer the reader to \cite{JKMT} for regularity and large time behavior results of level-set forced mean curvature flows with the Neumann boundary condition.
\end{rem}

\medskip
We give a non exhaustive list of results related to our paper.
The quenching phenomena for the initial-boundary value problem of semilinear as well as quasilinear parabolic equations have been well studied (see \cite{K, AW, FKL, DZ} for instance).
Geometrically if we consider the mean curvature flow, then we may have the singularity of surfaces which is sometimes called the quenching, or the pinching. 
There have been many works to study the singularity of the mean curvature flow, and 
%
%
therefore we here only focus on the mean curvature flow under the axisymmetric setting.
Under some relatively restrictive settings, the location of the quenching has been studied, and see \cite{DK, EM}.
We also refer to \cite{SS, GS} for the study of asymptotic profiles of solutions at the quenching places,
and to \cite{AAG, M} for the study of number of times that quenching occurs.
Non-compact surfaces are also studied in \cite{GSU1, GSU2}.
It is worth emphasizing that as far as the authors know there has not been any work studying the quenching phenomena for the forced mean curvature flow.

\bigskip
The paper is organized as follows.
Section \ref{sec:pfthm1} is devoted to give the proofs of Proposition \ref{prop:1} and Theorem \ref{thm:1} by constructing simple subsolutions and supersolutions.
In Section \ref{sec:ep-quenching}, we first establish a priori estimate on the derivatives of the solution to \eqref{eq:periodic} in Corollary \ref{cor:apriori}, and apply Corollary \ref{cor:apriori} to prove Theorem \ref{thm:2}. We then give the proof of Theorem \ref{thm:3} in Section \ref{sec:quen-place}.

\section{Quenching and Non-quenching}\label{sec:pfthm1}

We begin with this section by giving the proof of Proposition \ref{prop:1}.

\begin{proof}[Proof of Proposition {\rm\ref{prop:1}}]
Here, we construct a spatially constant super/subsolution to \eqref{eq:periodic}.
For $a, b>0$, let us consider the ordinary differential equation
\begin{equation}\label{eq:ODE}
\left\{
\begin{array}{ll}
\dot{y}=-\frac{n-1}{y}+\frac{a}{\ep} & \text{for} \ t>0, \\
y(0)=\ep^\alpha b.
\end{array}
\right.
\end{equation}
We see that
\begin{align}
\dot{y}(0)
=-\frac{n-1}{y(0)}+\frac{a}{\ep}
=-\frac{n-1}{\ep^\alpha b}+\frac{a}{\ep}
<0 & \quad
\text{if} \ \alpha>1, \quad\text{and}
\label{eq:ob1}\\
\dot{y}(0)
=-\frac{n-1}{y(0)}+\frac{a}{\ep}
=-\frac{n-1}{\ep^\alpha b}+\frac{a}{\ep}
>0 & \quad
\text{if} \ \alpha<1
\label{eq:ob2}
\end{align}
for sufficiently small $\ep>0$, which implies the monotonicity of $y$ in a  neighborhood of $t=0$.
Thus, if $\alpha>1$, then
\[
\dot{y}(t)=-\frac{n-1}{y(t)}+\frac{a}{\ep}
\le -\frac{n-1}{y(0)}+\frac{a}{\ep}<0
\quad\text{for all} \ t>0.
\]
Also, if $\alpha<1$, then
\[
\dot{y}(t)=-\frac{n-1}{y(t)}+\frac{a}{\ep}
\ge -\frac{n-1}{y(0)}+\frac{a}{\ep}>0
\quad\text{for all} \ t>0.
\]
Therefore, letting $\ep>0$ be sufficiently small, we obtain that $y(t)$ is strictly decreasing if $\al>1$, and $y(t)$ is strictly decreasing if $\al<1$.

In case $\alpha>1$, taking $a:=\max_{\T} f$ and $b:=\max_{\T}g$, we obtain a supersolution
$v^\ep(x,t):=y(t)$ to \eqref{eq:periodic}, which implies that there exists $\tilde{T}_\ep>0$ such that
\[
v^\ep(x,\tilde{T}_\ep)=0
\quad\text{for all} \ x\in\T.
\]
By the comparison principle, we get the inequality
\begin{align*}
  0\le u^\ep(x,t)\le v^\ep(x,t)
  \quad\text{for all} \ (x,t)\in\T\times\left(0,T_{\ep}^{\ast}\wedge\tilde{T}_{\ep}\right). 
\end{align*}
Combining the above two, $u^{\ep}$ quenches in finite time.

In case $0<\alpha<1$, taking $a:=\min_{\T} f$ and $b:=\min_{\T}g$, we obtain a subsolution
$w^\ep(x,t):=y(t)$ to \eqref{eq:periodic}, which implies that there exists $C>0$ such that
\[
C\leq w^\ep(x,t)\le u^\ep(x,t)\quad\text{for all} \ x\in\T, t\in[0,\infty),
\]
which implies $T^\ast_\ep=+\infty$.
\end{proof}

\begin{proof}[Proof of Theorem {\rm\ref{thm:1}}]
  Setting $v^\ep(x,t):=\frac{1}{\ep}u^\ep(x,t)$, we rewrite \eqref{eq:periodic} as
  \begin{equation}\label{eq:periodic-v}
  \left\{
  \begin{array}{ll}
  \displaystyle
  \ep^2\left(v^\ep_t-\frac{v^\ep_{xx}}{\ep^2(v^\ep_x)^2+1}\right)
  =
  f(x)\sqrt{\ep^2(v^\ep_x)^2+1}-\frac{n-1}{v^\ep}
  & \text{for} \ x\in \T, \ t>0, \\
  v^\ep(x,0)=g(x) & \text{for} \ x\in\T.
  \end{array}
  \right.
  \end{equation}

We first prove (a).
By the assumption, there exists $\delta>0$ such that
\[
(f(x)-\delta) g(x)>n-1 \quad\text{for} \ x\in\T.
\]
We now prove that $\psi(x):=\frac{n-1}{f(x)-\delta}$ is a subsolution to \eqref{eq:periodic-v}.
It is clear to see $\psi(x)<g(x)$ for all $x\in\T$.
Noting that
\begin{align*}
& \ep^2\left(\psi_t-\frac{\psi_{xx}}{1+\ep^2(\psi_x)^2}\right)\le \ep^2|\psi_{xx}|\le C(\delta)\ep^2, \\
& f(x)\sqrt{\ep^2(\psi_x)^2+1}-\frac{n-1}{\psi}=f(x)\big(\sqrt{\ep^2(\psi_x)^2+1}-1\big)+\delta\ge\delta
\end{align*}
for some $C(\delta)>0$.
Thus, taking $\ep_0>0$ so that $\ep_0\le \sqrt{\frac{\delta}{C(\delta)}}$, we see that $\psi$ is a subsolution to \eqref{eq:periodic-v} for all $\ep\in(0,\ep_0)$.
Therefore, by the comparison principle, we obtain
\[
\frac{u^\ep(x,t)}{\ep}=v^\ep(x,t)\ge \psi(x)\ge\min_{\T}\psi=c_0>0
\]
for all $x\in\T$, $t\in[0,\infty)$, and $\ep\in(0,\ep_0)$, which implies the conclusion.

To prove (b), setting $a:=\max_{\T}f, b:=\max_{\T}g$, we let $y$ be the solution to \eqref{eq:ODE} with $\alpha=1$.
Note that
\[
\dot{y}(0)=\frac{1}{\ep\max_{\T}g}\big(\max_{\T}f\max_{\T}g-(n-1)\big)<0,
\]
which implies that $y$ is strictly decreasing.
Therefore, by using a similar argument to that of the proof in Proposition \ref{prop:1} for $\alpha>1$, we yield that $T^\ast_\ep<+\infty$.
\end{proof}

\section{$\ep$-limit quenching}\label{sec:ep-quenching}

To prove Theorem \ref{thm:2}, we first establish a priori estimate on the derivatives of the solution.
In this section, we always assume that $\al=1$.

\begin{prop}\label{prop:est1}
Let $\tau>0$. If there exists $c_\ast>0$ such that $u^\ep(x,t)\ge \ep c_\ast$ for all
$(x,t)\in \T\times(0,\tau)$, then
\[
|\ep u_t^\ep(x,t)|\le Ce^{\frac{n-1}{\ep^{2}c_{\ast}^{2}}t} \quad \text{for all} \ (x,t)\in\T\times\left(0, \tau\right)
\]
for some $C=C(\tau)\ge0$. In particular, for $k\in\N$, there exist a constant $C=C(\tau,k)>0$ such that
\begin{align*}
  |\ep u_{t}^{\ep}(x,t)|\leq C
  \quad \text{for all} \ (x,t)\in\T\times\left(0, \tau\land\frac{k\ep^{2}c_{\ast}^{2}}{n-1}\right).
\end{align*}
\end{prop}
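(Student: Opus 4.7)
The plan is to apply a parabolic maximum principle to $w := u^\ep_t$. I would begin by differentiating \eqref{eq:periodic} with respect to $t$; after some bookkeeping, this produces a linear parabolic equation of the form
\[
w_t = a(x,t)\, w_{xx} + b(x,t)\, w_x + c(x,t)\, w,
\]
with $a = 1/((u^\ep_x)^2+1) > 0$ and, most importantly, zero-order coefficient
\[
c(x,t) = \frac{n-1}{(u^\ep)^2}.
\]
This is the only term with an unfavorable sign, and it is precisely where the lower bound hypothesis enters: $u^\ep \ge \ep c_\ast$ forces $c \le (n-1)/(\ep^2 c_\ast^2)$.

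Next I would bound the envelope $M(t) := \max_{x \in \T} |w(x,t)|$. At a spatial extremum the first-order term vanishes ($w_x = 0$) and the second-order term has the favorable sign ($w_{xx}\le 0$ at a max, $\ge 0$ at a min), so the differential inequality
\[
M'(t) \le \frac{n-1}{\ep^2 c_\ast^2}\, M(t)
\]
holds in the one-sided sense on $(0,\tau)$. Gronwall then yields $M(t) \le M(0)\,\exp\bigl(\tfrac{n-1}{\ep^2 c_\ast^2}\, t\bigr)$. For the initial-data bound, I would substitute $u^\ep(x,0) = \ep g(x)$ into \eqref{eq:periodic} and multiply by $\ep$ to obtain
\[
\ep\, u^\ep_t(x,0) = \frac{\ep^2 g''(x)}{\ep^2 (g'(x))^2 + 1} - \frac{n-1}{g(x)} + f(x)\sqrt{\ep^2 (g'(x))^2 + 1},
\]
which is bounded uniformly in $\ep \in (0,1]$ by a constant depending only on $f$ and $g$ (via (A1)). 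Combined with the Gronwall bound, this gives the first estimate with $C = C(\tau)$ (in fact, independent of $\tau$). The second (``in particular'') estimate follows immediately by restricting to $t \le k \ep^2 c_\ast^2/(n-1)$, on which the exponential factor is at most $e^k$, so $C(\tau,k) := C(\tau)\,e^k$ works.

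The main obstacle is the sign of the zero-order coefficient $c$: differentiating $-(n-1)/u^\ep$ inevitably produces a \emph{positive} lower-order term, so the maximum principle cannot give a uniform bound for free, only an exponentially growing one whose rate is dictated by the available lower bound on $u^\ep$. Since our hypothesis only provides $u^\ep \ge \ep c_\ast$, this rate is forced to be of order $\ep^{-2}$, explaining why $\ep$-independent bounds survive only up to times of order $\ep^2$. A secondary technical point is the justification of differentiating in $t$ and applying the pointwise maximum principle: one can appeal to standard interior parabolic regularity for the quasilinear equation \eqref{eq:periodic} (which is uniformly parabolic on the set $\{u^\ep \ge \ep c_\ast\}$ since the $-1/u^\ep$ term becomes smooth there), or alternatively carry out the argument on a regularized problem and pass to the limit.
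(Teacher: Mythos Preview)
Your proposal is correct and follows essentially the same route as the paper: differentiate \eqref{eq:periodic} in $t$, observe that $\phi=u^\ep_t$ satisfies a linear parabolic equation whose zero-order coefficient $(n-1)/(u^\ep)^2$ is bounded by $(n-1)/(\ep^2 c_\ast^2)$, and compare with the spatially constant barriers $\pm\tfrac{C_1}{\ep}\exp\bigl(\tfrac{n-1}{\ep^2 c_\ast^2}t\bigr)$. The only cosmetic difference is the initial bound: the paper obtains $|u^\ep_t(x,0)|\le C_1/\ep$ by sandwiching $u^\ep$ between $\psi^\pm(x,t)=\ep g(x)\pm C_1 t/\ep$ on a short interval, whereas you read it off directly from the equation at $t=0$; both give the same $\ep$-independent bound on $\ep u^\ep_t(\cdot,0)$.
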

\begin{proof}
We define $\psi^\pm$ by
\[
\psi^{\pm}(x,t):=\ep g(x)\pm \frac{C_{1}}{\ep}t,
\]
where
\[
C_{1}:=\ep^{2}\max_{x\in\T}|g''(x)|+\overline{f}\sqrt{\ep^2\max_{\T}(g')^2+1}+\frac{2(n-1)}{\ul{g}}>0,
\]
and $\overline{f}=\max_{\T}f$, and $\underline{g}=\min_{\T}g$.

Then, $\psi^{\pm}$ is a supersolution and subsolution to \eqref{eq:periodic} for $t\in (0,\sigma_1)$,
where $\sigma_1:=\frac{\ep^2\underline{g}}{2C_1}$.
Indeed, noting that $\psi^+_t=\frac{C_1}{\ep}$, $\psi^+_x=\ep g'$, $\psi^+_{xx}=\ep g''$,
\begin{align*}
&\psi^{+}_t-\frac{\psi^{+}_{xx}}{(\psi^{+}_x)^2+1}+\frac{n-1}{\psi^{+}}
-\frac{1}{\ep}f(x)\sqrt{(\psi^{+}_x)^2+1}\\
\ge & \
\frac{C_1}{\ep}
-\left(\ep\max|g''|
+\frac{\overline{f}}{\ep}\sqrt{\ep^2\max_\T (g')^{2}+1}\right)>0.
\end{align*}
To prove that $\psi^-$ is a subsolution to \eqref{eq:periodic}, noting that
\[
\frac{n-1}{\psi^-}
=
\frac{n-1}{\ep g-\frac{C_1t}{\ep}}
\le
\frac{n-1}{\ep \underline{g}-\frac{C_1\sig_1}{\ep}}
=\frac{2(n-1)}{\ep \underline{g}}
\quad\text{for all} \quad t\in[0,\sigma_1],
\]
we can similarly conclude
\[
\psi^{-}_t-\frac{\psi^{-}_{xx}}{(\psi^{-}_x)^2+1}+\frac{n-1}{\psi^{-}}
-\frac{1}{\ep}f(x)\sqrt{(\psi^{-}_x)^2+1}
\le 0.
\]
By the comparison principle, we obtain
\begin{equation}\label{ineq:initial}
|u^{\ep}(x,t)-\ep g(x)|\le \frac{C_1t}{\ep}
\quad\text{for all} \ t\in[0,\sig_1].
\end{equation}
Especially, we have
\begin{align*}
  |u^{\ep}_{t}(x,0)|
  \leq\frac{C_{1}}{\ep}.
\end{align*}

Next, we differentiate \eqref{eq:periodic} with respect to $t$ to yield
\begin{align}\label{eq:periodic-t}
  (u^{\ep}_{t})_{t}
  =
  \frac{(u^{\ep}_{t})_{xx}}{(u^{\ep}_{x})^{2}+1}
  -\frac{2u^{\ep}_{xx}u^{\ep}_{x}(u^{\ep}_{t})_{x}}{((u^{\ep}_{x})^{2}+1)^{2}}
  +\frac{n-1}{(u^{\ep})^{2}}u^{\ep}_{t}
  +\frac{f(x)}{\ep}
  \frac{u^{\ep}_{x}(u^{\ep}_{t})_{x}}{\sqrt{(u^{\ep}_{x})^{2}+1}}.
\end{align}
Set $\phi(x,t):=u^{\ep}_{t}(x,t)$. 
Then, by \eqref{eq:periodic-t}, 
\begin{equation}\label{eq:phi}
\phi_t=
  \frac{\phi_{xx}}{(u^{\ep}_{x})^{2}+1}
  -\frac{2u^{\ep}_{xx}u^{\ep}_{x}\phi_{x}}{((u^{\ep}_{x})^{2}+1)^{2}}
  +\frac{n-1}{(u^{\ep})^{2}}\phi
  +\frac{f(x)}{\ep}\frac{u^{\ep}_{x}\phi_{x}}{\sqrt{(u^{\ep}_{x})^{2}+1}}
  \quad\text{for all}\quad
  0<t<\tau.
\end{equation}
Let $\phi_{\pm}(x,t):=\pm\frac{C_{1}}{\ep}\exp\left(\frac{n-1}{\ep^{2}c_{\ast}^{2}}\,t\right)$. 
Note that
\begin{align*}
&(\phi_{-})_t(x,t)
  =
  \frac{n-1}{\ep^{2}c_{\ast}^{2}}\phi_{-}(x,t)
  \leq
  \frac{n-1}{(u^{\ep})^{2}}\phi_{-}(x,t)
  \quad\text{as}\quad
  \phi_{-}(x,t)<0, \\
&(\phi_{+})_t(x,t)
  =
  \frac{n-1}{\ep^{2}c_{\ast}^{2}}\phi_{+}(x,t)
  \geq
  \frac{n-1}{(u^{\ep})^{2}}\phi_{+}(x,t)
  \quad\text{as}\quad
  \phi_{+}(x,t)>0. 
\end{align*}
Thus, $\phi_\pm$ are a subsolution and a supersolution to \eqref{eq:phi}, respectively. 
We use the comparison principle to get
\[
\phi_{-}(x,t)
\le\phi(x,t)=u^{\ep}(x,t)
\le\phi_{+}(x,t)
\quad
\text{for all} \ 
(x,t)\in\T\times(0,\tau), 
\]
which implies the conclusion. 
\qedhere
\end{proof}

\begin{prop}\label{prop:est2}
Let $\tau>0$. If there exists $c_\ast>0$ such that $u^\ep(x,t)\ge \ep c_\ast$ for all
$(x,t)\in \T\times(0,\tau)$, then for all $k\in\N$,
\[
|u_x^\ep(x,t)|\le C \quad \text{for all} \ (x,t)\in\T\times\left(0, \tau\land\frac{k\ep^2 c_\ast^2}{n-1}\right)
\]
for some $C=C(\tau,k)>0$.
\end{prop}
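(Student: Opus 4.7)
The plan is to apply the Bernstein method to the function $w(x,t):=(u^\ep_x(x,t))^2$. First I would differentiate the PDE in \eqref{eq:periodic} with respect to $x$ and multiply by $2u^\ep_x$; using the identities $w_x=2u^\ep_x u^\ep_{xx}$ and $w_{xx}=2(u^\ep_{xx})^2+2u^\ep_x u^\ep_{xxx}$, this yields a parabolic equation for $w$ of the schematic form
\[
w_t=\frac{w_{xx}}{(u^\ep_x)^2+1}-\text{(nonnegative dissipation in }w_x)+\frac{2(n-1)}{(u^\ep)^2}w+\frac{2f'(x)u^\ep_x\sqrt{w+1}}{\ep}+\frac{f(x)u^\ep_x w_x}{\ep\sqrt{w+1}}.
\]
The dissipation terms coming from the $u^\ep_{xx}$ contributions are favorable (negative) and will be discarded at the maximum point.

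Next I would let $M(t):=\max_{x\in\T}w(x,t)$. Standard parabolic regularity (applied once the hypothesis $u^\ep\ge \ep c_\ast$ has been combined with Proposition~\ref{prop:est1}) guarantees $M$ is locally Lipschitz in $t$. At an interior maximum $(x_0,t_0)$ one has $w_x(x_0,t_0)=0$ and $w_{xx}(x_0,t_0)\le 0$, so the evolution equation collapses to the pointwise estimate
\[
w_t(x_0,t_0)\le \frac{2(n-1)w(x_0,t_0)}{(u^\ep)^2(x_0,t_0)}+\frac{2\|f'\|_\infty\sqrt{w(x_0,t_0)}\sqrt{w(x_0,t_0)+1}}{\ep}.
\]
Inserting $u^\ep\ge\ep c_\ast$ and the elementary inequality $\sqrt{s(s+1)}\le s+1/2$, and invoking Danskin's lemma to pass from the pointwise bound to an a.e.\ differential inequality, I would obtain
\[
M'(t)\le (A+B)M(t)+\tfrac{1}{2}B,\qquad A:=\frac{2(n-1)}{\ep^2c_\ast^2},\quad B:=\frac{2\|f'\|_\infty}{\ep}.
\]

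Gronwall's inequality then yields $M(t)\le (M(0)+1)\exp((A+B)t)$, and since $u^\ep(\cdot,0)=\ep g$ we have $M(0)=\ep^2\|g'\|_\infty^2$. The final step is to check that the exponent stays bounded on the allowed time interval: for $t\le\tau\wedge\frac{k\ep^2c_\ast^2}{n-1}$ one has $At\le 2k$ and $Bt\le \frac{2k\|f'\|_\infty c_\ast^2 \ep}{n-1}$, both controlled by constants depending only on $\tau,k$ (and the fixed data $f,g,c_\ast,n$) for $\ep$ in a bounded range. This gives $M(t)\le C(\tau,k)$ and hence $|u^\ep_x(x,t)|\le\sqrt{C(\tau,k)}$.

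The main obstacle is the forcing contribution $\ep^{-1}f'(x)u^\ep_x\sqrt{w+1}$, whose coefficient blows up as $\ep\to 0$; a naive Gronwall on the time interval $(0,\tau)$ would produce a factor $\exp(C/\ep)$. The decisive observation — and the reason for the curious time restriction $\frac{k\ep^2c_\ast^2}{n-1}$ — is that on this window $Bt=O(\ep)$, so the singular-looking coefficient is exactly compensated by the shortness of the time, and the Gronwall exponent is controlled by $2k+O(\ep)$. The sign-favorable dissipation in $w_x$ (which I discard rather than exploit) is what permits dropping the $w_{xx}$ term at the maximum without extra care.
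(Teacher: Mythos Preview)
Your Gronwall argument is correct and gives a genuinely different proof from the paper's. The paper also differentiates in $x$ and looks at the spatial maximum of $\psi=\tfrac12(u^\ep_x)^2$, but instead of discarding the dissipation term $-\dfrac{(u^\ep_{xx})^2}{(u^\ep_x)^2+1}$ it \emph{exploits} it: using the PDE, one rewrites
\[
\frac{(u^\ep_{xx})^2}{(u^\ep_x)^2+1}
=\bigl((u^\ep_x)^2+1\bigr)\Bigl(u^\ep_t+\frac{n-1}{u^\ep}-\frac{f(x)}{\ep}\sqrt{(u^\ep_x)^2+1}\Bigr)^2,
\]
which, after bounding $|u^\ep_t|$ via Proposition~\ref{prop:est1}, produces a \emph{quartic} lower bound $\sim \ep^{-2}\underline{f}^{\,2}(u^\ep_x)^4$. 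Balancing this against the quadratic upper bound $\lesssim \ep^{-2}(u^\ep_x)^2$ coming from the zero-order and $f'$ terms yields the algebraic inequality $(u^\ep_x)^4\le C(u^\ep_x)^2+C$ at the space--time maximum, hence $|u^\ep_x|\le C$ directly, with no integration in time.

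Your route is more elementary: you throw away the dissipation, accept a linear differential inequality $M'\le (A+B)M+\tfrac12 B$ with $A\sim\ep^{-2}$ and $B\sim\ep^{-1}$, and observe that on the interval $t\le \frac{k\ep^2c_\ast^2}{n-1}$ both $At\le 2k$ and $Bt=O(\ep)$, so the Gronwall factor is $O(e^{2k})$. This avoids the PDE-substitution trick entirely and in fact makes Proposition~\ref{prop:est1} unnecessary for the estimate itself (smoothness of $u^\ep$ already ensures $M$ is locally Lipschitz, so your invocation of Proposition~\ref{prop:est1} for regularity is superfluous). The trade-off is that the paper's argument extracts the genuine coercivity in $u^\ep_x$ provided by the forcing term $\ep^{-1}f\sqrt{1+(u^\ep_x)^2}$, which is why its bound is an algebraic rather than an exponential-in-$k$ one; your argument would also work with $f\equiv\text{const}$ (where $f'=0$ and $B=0$), whereas the paper's substitution relies on $\underline{f}>0$. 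One small correction to your narrative: the $A$-term, not the $B$-term, is the ``main obstacle'' dictating the time restriction, since $At\sim t/\ep^2$ while $Bt\sim t/\ep$.
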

\begin{proof}
Differentiate \eqref{eq:periodic} with respect to $x$ to obtain
\[
u^{\ep}_{xt}=
\frac{u^{\ep}_{xxx}}{(u^{\ep}_{x})^{2}+1}
-
\frac{2u^{\ep}_{x}(u^{\ep}_{xx})^{2}}{((u^{\ep}_{x})^{2}+1)^{2}}
+\frac{n-1}{(u^{\ep})^{2}}u^{\ep}_{x}
+\frac{f'(x)}{\ep}\sqrt{(u^{\ep}_{x})^{2}+1}
+\frac{f(x)u^{\ep}_{x}u^{\ep}_{xx}}{\ep\sqrt{(u^{\ep}_{x})^{2}+1}}.
\]
Multiply by $u^{\ep}_{x}$ and set $\psi:=\frac{1}{2}(u^{\ep}_{x})^{2}$ to obtain
\[
\psi_{t}=
\frac{\psi_{xx}-(u^{\ep}_{xx})^{2}}{(u^{\ep}_{x})^{2}+1}
-\frac{2\psi_{x}^{2}}{((u^{\ep}_{x})^{2}+1)^{2}}
+\frac{(n-1)(u^{\ep}_{x})^{2}}{(u^{\ep})^{2}}
+\frac{f'(x)}{\ep}u^{\ep}_{x}\sqrt{(u^{\ep}_{x})^{2}+1}
+\frac{f(x)u^{\ep}_{x}\psi_{x}}{\ep\sqrt{(u^{\ep}_{x})^{2}+1}}.
\]

Take $(x_0, t_0)\in\T\times[0,\tau\land\frac{k\ep^2c_\ast^2}{n-1}]$ so that
$\psi(x_0,t_0)=\max_{\T\times[0,\tau\land\frac{k\ep^2c_\ast^2}{n-1}]}\psi$.
If $t_0=0$, then there is nothing to prove. Thus, assume $t_0>0$.
Then,
$\psi_{x}(x_0,t_0)=0$, $\psi_{t}(x_0,t_0)\ge 0,\psi_{xx}(x_0,t_0)\le 0$.
Therefore, at $(x,t)=(x_0, t_0)$,
\begin{align}
  \frac{(u^{\ep}_{xx})^{2}}{(u^{\ep}_{x})^{2}+1}
  &\leq
  \frac{(n-1)(u^{\ep}_x)^2}{(u^{\ep})^2}
+\frac{f'(x)}{\ep}u^{\ep}_{x}\sqrt{(u_{x}^{\ep})^{2}+1}
\le
  \frac{(n-1)(u^{\ep}_x)^2}{\ep^2c_\ast^2}
+\frac{f'(x)}{\ep}u^{\ep}_{x}\sqrt{(u^{\ep}_{x})^{2}+1}
\nonumber \\
  &\leq
  \frac{(n-1)(u^{\ep}_x)^2}{\ep^2c_\ast^2}+\frac{C}{\ep}((u^{\ep}_x)^2+1)
  \le \frac{C}{\ep^2}(u^{\ep}_x)^2+\frac{C}{\ep}
\label{ineq:Bern1}
\end{align}
for some $C\ge0$, which is independent of $\ep$.

Moreover,
\[
\frac{(u^{\ep}_{xx})^{2}}{(u^{\ep}_{x})^{2}+1}
=
\left((u^{\ep}_{x})^{2}+1\right)\left(\frac{u^{\ep}_{xx}}{(u^{\ep}_{x})^{2}+1}\right)^{2}
=\left((u^{\ep}_{x})^{2}+1\right)
  \left(
  u^{\ep}_{t}+\frac{n-1}{u^{\ep}}-\frac{1}{\ep}f(x)\sqrt{(u^{\ep}_{x})^{2}+1}
  \right)^{2}.
\]
Setting
$X:=\frac{n-1}{u^{\ep}}-\frac{1}{\ep}f(x)\sqrt{(u^{\ep}_{x})^{2}+1}$, we obtain
\begin{equation}\label{ineq:Bern2}
\frac{(u^{\ep}_{xx})^{2}}{(u^{\ep}_{x})^{2}+1}
\ge
\left((u^{\ep}_{x})^{2}+1\right)\left(\frac{X^2}{2}-(u^{\ep}_t)^2\right).
\end{equation}
Here, by the Young inequality, for any $\rho\in(0,1)$,
\begin{align}
X^2
&=
  \frac{(n-1)^{2}}{(u^{\ep})^{2}}
  -\frac{2(n-1)}{\ep u^{\ep}}f(x)\sqrt{(u^{\ep}_{x})^{2}+1}
  +\frac{1}{\ep^{2}}f(x)^{2}\left((u^{\ep}_{x})^{2}+1\right)
  \nonumber \\
 &\ge
  -\frac{2(n-1)}{\ep u^{\ep}}\overline{f}\sqrt{(u^{\ep}_{x})^{2}+1}
  +\frac{1}{\ep^{2}}\underline{f}^{2}\left((u^{\ep}_{x})^{2}+1\right)
  \nonumber \\
&\geq
-\frac{(n-1)}{\ep^{2}c_\ast}\left(\frac{\overline{f}^{2}}{\rho}+\rho((u^{\ep}_{x})^{2}+1)\right)
  +\frac{1}{\ep^{2}}\underline{f}^{2}(u^{\ep}_{x})^{2}
  \notag
  \\
  &=
  \frac{1}{\ep^{2}}\bigg(
  \underline{f}^{2}
  -\frac{(n-1)}{c_\ast}\rho
  \bigg)(u^{\ep}_{x})^{2}
  -
  \frac{(n-1)}{\ep^{2}c_\ast}\left(\frac{\overline{f}^{2}}{\rho}+\rho\right)
  =:   \frac{1}{\ep^{2}}\big(C^1_\rho (u^{\ep}_x)^2-C_\rho^2\big),
\label{ineq:Bern3}
\end{align}
where we set $\overline{f}:=\max_{\T}f$, $\underline{f}:=\min_{\T}f$.

Take $\rho\in(0,1)$ so small that $C^{1}_{\rho}>0$.
Combining \eqref{ineq:Bern1} and \eqref{ineq:Bern2} with \eqref{ineq:Bern3}, we obtain
\begin{align*}
\frac{C}{\ep^2}(u^{\ep}_x)^2+\frac{C}{\ep}
&\ge
\left((u^{\ep}_{x})^{2}+1\right)\left(\frac{X^2}{2}-(u^{\ep}_t)^2\right)\\
&\ge
\frac{1}{2}\left((u^{\ep}_{x})^{2}+1\right)
\cdot\frac{1}{\ep^2}\big(C^1_\rho (u^{\ep}_x)^2-C_\rho^2\big)
-
\left((u^{\ep}_{x})^{2}+1\right)\cdot\left(\frac{C}{\ep}\right)^2
\end{align*}
by using Proposition \ref{prop:est1}.
Summing up, we obtain
\[
(u^{\ep}_x)^4\le C (u^{\ep}_x)^2+C
\]
for some $C\ge0$ which is independent of $\ep>0$. Therefore, $|u^{\ep}_x|\le C$ for some $C\ge0$.
\end{proof}

The following corollary is a direct consequence of Propositions \ref{prop:est1}, \ref{prop:est2}.
\begin{cor}\label{cor:apriori}
If there exists $c_\ast>0$ such that
\begin{equation}\label{cond:no-quench}
u^\ep(x,t)\ge \ep c_\ast
\quad\text{for all} \
(x,t)\in \T\times[0,\infty),
\end{equation}
then for all $k\in\N$,
\[
|\ep u_t^\ep(x,t)|+|u_x^\ep(x,t)|\le C \quad \text{for all} \ (x,t)\in\T\times\left[0,\frac{k\ep^{2}c_{\ast}^{2}}{n-1}\right)
\]
for some $C=C(k)\geq 0$ independent of $\ep\in(0,1)$.
\end{cor}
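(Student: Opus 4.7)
The corollary is advertised as a direct combination of Propositions \ref{prop:est1} and \ref{prop:est2}, so my strategy is simply to apply them on an unbounded time interval and verify that the resulting constant depends only on $k$ (and the fixed data $f,g,n,c_\ast$), not on $\ep\in(0,1)$.

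First, since the lower bound \eqref{cond:no-quench} holds for all $t>0$, I can apply Proposition \ref{prop:est1} with any $\tau>0$. Reading the ``in particular'' clause there with the choice $\tau=+\infty$ (equivalently, taking $\tau$ arbitrarily large), we get
\[
|\ep u_t^\ep(x,t)|\le C_1\, e^{\frac{n-1}{\ep^2 c_\ast^2}t}
\quad\text{for all } (x,t)\in\T\times(0,\infty),
\]
where the prefactor $C_1=\ep^2\max_\T|g''|+\bar f\sqrt{\ep^2\max_\T(g')^2+1}+2(n-1)/\underline g$ taken from the proof is bounded uniformly for $\ep\in(0,1)$ by some $\tilde C_0=\tilde C_0(f,g,n)$. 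Restricting the time variable to $[0,k\ep^2 c_\ast^2/(n-1))$ makes the exponent at most $k$, so the exponential contributes at most $e^k$. This gives $|\ep u_t^\ep(x,t)|\le \tilde C_0 e^k$ on the required window, which is the first half of the claim with an $\ep$-independent constant.

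Next, I would feed the bound on $\T\times[0,\infty)$ into Proposition \ref{prop:est2}. Its Bernstein-type argument uses Proposition \ref{prop:est1} precisely in the form $|u_t^\ep|\le C/\ep$ on the same window $[0,k\ep^2 c_\ast^2/(n-1))$, and the remaining constants $\bar f,\underline f,c_\ast,\rho,C_\rho^1,C_\rho^2$ are all independent of $\ep$. Consequently the concluding inequality $(u_x^\ep)^4\le C(u_x^\ep)^2+C$ yields $|u_x^\ep|\le C$ with $C=C(k)$ independent of $\ep\in(0,1)$. Adding the two estimates gives the corollary.

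The only genuine point to check — and the one I expect to be the ``main obstacle'' if any — is the $\ep$-uniformity of the constants. Everything in $C_1$ is either an $\ep$-independent quantity determined by $f,g,n$ or carries a nonnegative power of $\ep$, so $C_1$ remains bounded as $\ep\to 0$; and in the Bernstein inequality the potentially dangerous terms $(n-1)/(u^\ep)^2$ and $f/\ep$ are tamed by the hypothesis $u^\ep\ge\ep c_\ast$ combined with the $|\ep u_t^\ep|\le C$ bound just obtained, so no $\ep^{-1}$-blow up survives. Once this bookkeeping is done, the corollary follows with no further work.
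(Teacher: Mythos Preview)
Your proposal is correct and matches the paper's approach exactly: the paper states the corollary as a direct consequence of Propositions~\ref{prop:est1} and~\ref{prop:est2} without further argument, and your write-up simply supplies the bookkeeping that the constants $C_1$ and those in the Bernstein argument are bounded uniformly for $\ep\in(0,1)$, with the exponential in Proposition~\ref{prop:est1} capped by $e^k$ on the restricted time window.
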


We note that Lemma \ref{lem:ODE-vw} is a special case of a comparison principle for first-order Hamilton--Jacobi equations of the type $u_{t}+H(x,u,Du)=0$, where $H$ is Lipschitz continuous in the $u$-variable.
See \cite{Hung-book} for example.

\begin{lem}\label{lem:ODE-vw}
Let $T>0$, and $v, w\in C([0,T])$ be a viscosity subsolution, and supersolution to
\begin{equation}\label{eq:ODE-vw}
u_t(t)=-\frac{n-1}{u(t)}+f\quad\text{for} \ t\in(0,T],
\end{equation}
where $f$ is a positive constant, respectively.
Assume that $v(t), w(t)\ge c_\ast$ for all $t\in[0,T]$ and some $c_\ast>0$.
If $v(0)\le w(0)$, then $v(t)\le w(t)$ for all $t\in[0,T]$.
\end{lem}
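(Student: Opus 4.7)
The plan is to apply the standard doubling-of-variables technique after an exponential rescaling that restores the ``good'' monotonicity structure needed for viscosity comparison. The key observation is that, since $v, w \ge c_\ast > 0$ on $[0,T]$, the right-hand side $F(u) := -\frac{n-1}{u} + f$ is Lipschitz in $u$ on $[c_\ast,\infty)$ with constant $L := (n-1)/c_\ast^2$, although $F$ is \emph{increasing} in $u$, which is the wrong sign for a direct comparison.

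First I would rescale by setting $\tilde v(t) := e^{-Lt} v(t)$ and $\tilde w(t) := e^{-Lt} w(t)$. By the standard change of variable in the viscosity sense (valid since $u \mapsto e^{-Lt}u$ is a smooth strictly monotone diffeomorphism for each $t$), $\tilde v$ (resp.\ $\tilde w$) becomes a viscosity sub- (resp.\ super-) solution of $\tilde u_t = G(t, \tilde u)$ on $(0,T]$, where
\[
G(t, \tilde u) := -\frac{(n-1)\, e^{-2Lt}}{\tilde u} + e^{-Lt} f - L\, \tilde u.
\]
A direct calculation shows $\partial_{\tilde u} G(t, \tilde u) = \frac{(n-1)e^{-2Lt}}{\tilde u^{2}} - L \le 0$ exactly when $e^{Lt}\tilde u \ge c_\ast$, so $G(t,\cdot)$ is nonincreasing on the range traversed by $\tilde v, \tilde w$; this is precisely the structural condition that enables the standard Hamilton--Jacobi comparison.

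Next I would argue by contradiction. Assuming $M := \max_{[0,T]}(\tilde v - \tilde w) > 0$, fix $\lambda \in (0, M/T)$ and, for $\ep > 0$, introduce
\[
\Phi_\ep(t, s) := \tilde v(t) - \tilde w(s) - \frac{(t-s)^{2}}{2\ep} - \lambda t \qquad \text{on } [0,T]^{2}.
\]
Letting $(t_\ep, s_\ep)$ be a maximizer, the usual penalization estimates yield $t_\ep, s_\ep \to \bar t$ along a subsequence and $(t_\ep - s_\ep)^{2}/\ep \to 0$; the initial condition $\tilde v(0) \le \tilde w(0)$ rules out $t_\ep = 0$ and $s_\ep = 0$ for small $\ep$, while $t_\ep = T$ or $s_\ep = T$ remains compatible with the viscosity definitions. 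Writing the sub-/super-solution inequalities at $(t_\ep, s_\ep)$ with common gradient $(t_\ep-s_\ep)/\ep$ and subtracting,
\[
\lambda \le G(t_\ep, \tilde v(t_\ep)) - G(s_\ep, \tilde w(s_\ep)).
\]
Since $\Phi_\ep(t_\ep, s_\ep) \ge \Phi_\ep(\bar t, \bar t) > 0$ forces $\tilde v(t_\ep) \ge \tilde w(s_\ep)$, the monotonicity of $G(t_\ep, \cdot)$ gives $G(t_\ep, \tilde v(t_\ep)) \le G(t_\ep, \tilde w(s_\ep))$, whence
\[
\lambda \le G(t_\ep, \tilde w(s_\ep)) - G(s_\ep, \tilde w(s_\ep)) \longrightarrow 0 \quad \text{as } \ep \to 0
\]
by continuity of $G$ in $t$ together with the uniform boundedness of $\tilde w$. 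This contradicts $\lambda > 0$; hence $\tilde v \le \tilde w$, and equivalently $v \le w$ on $[0,T]$.

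The main obstacle is the wrong-signed monotonicity of $F$ in $u$; the exponential rescaling converts it into a monotone Hamiltonian, and it is precisely for this rescaling to work that the hypothesis $v, w \ge c_\ast > 0$ is essential (the larger $c_\ast$ is, the smaller the rescaling constant $L$ needed). The remaining steps---verifying the viscosity sub-/super-solution property of the rescaled functions and handling the boundary behavior of the maximizer---are standard features of the Crandall--Ishii--Lions doubling-of-variables framework, for which \cite{Hung-book} is a convenient reference.
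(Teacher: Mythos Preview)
Your proof is correct and follows the standard viscosity comparison template, but it handles the wrong-sign monotonicity of $F(u)=-\frac{n-1}{u}+f$ differently from the paper. You use a \emph{multiplicative} rescaling $\tilde u=e^{-Lt}u$ to transform the equation into one whose Hamiltonian $G(t,\cdot)$ is nonincreasing in $\tilde u$, and then run the doubling argument directly against that monotonicity. The paper instead uses an \emph{additive} perturbation: it sets $u(t):=v(t)-ae^{2Kt}$ with $K=4(n-1)/c_\ast^2$ and $a>0$ small, checks that $u$ stays above $c_\ast/2$, and then doubles variables using only the Lipschitz bound $|F(p)-F(q)|\le K|p-q|$ for $p,q\ge c_\ast/2$ together with the (increasing) monotonicity of $F$ itself at the contact point, where $u(s_\ep)\le w(s_\ep)$ forces $F(u(s_\ep))\le F(w(s_\ep))$. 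Both devices are standard and essentially interchangeable; yours has the virtue that once the monotone structure of $G$ is isolated the contradiction is immediate, while the paper's version avoids recomputing the transformed equation. One small technical point in your argument: with the exact choice $L=(n-1)/c_\ast^2$ the nonincreasing property of $G(t_\ep,\cdot)$ is only guaranteed for $\tilde u\ge c_\ast e^{-Lt_\ep}$, whereas in the doubling step you apply it at $\tilde w(s_\ep)$, which a priori only satisfies $\tilde w(s_\ep)\ge c_\ast e^{-Ls_\ep}$; taking any $L>(n-1)/c_\ast^2$ (or invoking $|t_\ep-s_\ep|\to 0$ before using the monotonicity) disposes of this cleanly.
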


\begin{proof}
Set $F(v):=-\dfrac{n-1}{v}+f$.
For $K=4(n-1)/c_{\ast}^{2}$,
\begin{align*}
|F(v)-F(w)|\leq K|v-w|\quad\text{for all } \ v,w\geq\frac{c_{\ast}}{2}.
\end{align*}
Pick $a_{0}\in\left(0,1\right)$ such that
$c_{\ast}-a_{0}e^{2KT}\geq c_{\ast}/2$.
Fix $a\in\left(0,a_{0}\right]$ and set
$u(t):=v(t)-ae^{2Kt}$ for $t\in[0,T]$.
Then we have
\begin{align*}
&  u(t)\geq c_{\ast}-ae^{2Kt}\geq\frac{c_{\ast}}{2}
  \quad\text{for all }
  t\in [0,T],
  \\
&u(0)=v(0)-a<w(0).
\end{align*}
We prove that $u(t)<w(t)$ for all $t\in[0,T]$ by contradiction.
Thus, suppose $\max_{t\in[0,T]}(u-w)(t)\geq 0$.
Then, we have
\begin{align*}
  t_{0}:=\min\left\{t\in(0,T)\mid u(t)-w(t)\geq 0\right\}\in(0,T].
\end{align*}
Define an auxiliary function $\Phi:\left[0,t_{0}\right]^{2}\to\R$ by
\begin{align*}
  \Phi(t,s):=u(t)-w(s)-\frac{(t-s)^{2}}{2\ep}-(t-t_{0})^{2}.
\end{align*}
Take $(t_{\ep},s_{\ep})\in\left[0,t_{0}\right]^{2}$ so that $\Phi(t_{\ep},s_{\ep})=\max_{\left[0,t_{0}\right]^{2}}\Phi$.
Then, as usual, we have $t_{\ep}, s_{\ep}\to t_{0}$ as $\ep\to 0$.
%

Note that $v, w$ are a viscosity sub/supersolution to \eqref{eq:ODE-vw} in $(0,t_0)$. Then, we can extend the property of viscosity sub/supersolutions up to the terminal time $t=t_0$.
Therefore, by the definition of viscosity solutions, we have
\begin{align*}
  2aKe^{2Kt_{\ep}}+\frac{t_{\ep}-s_{\ep}}{\ep}+2(t_{\ep}-t_{0})-F(v(t_{\ep}))
  \leq 0
  \leq\frac{t_{\ep}-s_{\ep}}{\ep}-F(w(s_\ep)).
\end{align*}
Noting that $u(s_\ep)\le w(s_\ep)$, we have $F(u(s_\ep))\le F(w(s_\ep))$.
Thus,
\begin{align*}
  2aKe^{2Kt_{\ep}}+2(t_{\ep}-t_{0})
  &\leq
  F(v(t_{\ep}))-F(w(s_\ep))
  \\
  &=
  F(v(t_{\ep}))-F(u(s_\ep))
  +F(u(s_\ep))-F(w(s_\ep))
  \\
  &\leq
  K|v(t_{\ep})-u(s_{\ep})|.
\end{align*}
Sending $\ep\to 0$ yields
\begin{align*}
  2aKe^{2Kt_{0}}
  \leq
  K|v(t_{0})-u(t_{0})|
=
  aKe^{2Kt_{0}},
\end{align*}
which is a contradiction.
%
\end{proof}

We are now ready to prove Theorem \ref{thm:2}.
\begin{proof}[Proof of Theorem {\rm\ref{thm:2}}]
We prove by contradiction, and therefore suppose that an $\ep$-limit quenching does not happen. Then,
there exists $c_\ast>0$ such that
\[
\frac{u^\ep(x,t)}{\ep}\ge c_\ast
\qquad\text{for all} \ (x,t)\in\T\times(0,\infty), \ \text{and} \ \ep\in(0,1).
\]
By a similar argument to that in the proof of Theorem \ref{thm:1} (a) we can prove that there exists $\delta>0$ such that
$\frac{(n-1)\ep}{f(x)+\delta}$ is a supersolution to \eqref{eq:periodic}, which implies by the comparison principle that
\[
0<\ep c_{\ast}\le u^\ep(x,t)\le \frac{(n-1)\ep}{f(x)+\delta} \le C\ep
\quad\text{for all} \ (x,t)\in\T\times[0,\infty).
\]

Set $w^\ep(x,t):=\frac{u^\ep(x,\ep^2 t)}{\ep}$.
Then,
\[
\left\{
\begin{array}{ll}
\displaystyle
w^\ep_t=\frac{\ep^2w^\ep_{xx}}{\ep^2(w^\ep_x)^2+1}-\frac{n-1}{w^\ep}
+
f(x)\sqrt{\ep^2(w^\ep_x)^2+1} & \text{for} \ x\in \T, \ t>0, \\
w^\ep(x,0)=g(x) & \text{for} \ x\in\T,
\end{array}
\right.
\]
and for each $k\in \N$,
\begin{equation}\label{est:w}
\|w^\ep_{t}\|_{L^\infty(\T\times\left[0,{kc_{\ast}^{2}}/{(n-1)}\right))}=\|\ep u^\ep_t\|_{L^\infty(\T\times\left[0,{k\ep^2 c_{\ast}^{2}}/{(n-1)}\right))}\le C
\end{equation}
for some $C=C(k)>0$ which is independent of $\ep$ in view of Corollary \ref{cor:apriori}.
Define the half-relaxed limits of $w^\ep$ by
\begin{align*}
&
w^\ast(x,t):=\limsup{}^{\ast}_{\ep\to0}\{w^\rho(y,s)\mid |x-y|+|t-s|\le\rho, 0<\rho<\ep\}, \\
&
w_\ast(x,t):=\liminf{}^{\ast}_{\ep\to0}\{w^\rho(y,s)\mid |x-y|+|t-s|\le\rho, 0<\rho<\ep\}.
\end{align*}
In light of \eqref{est:w} we have $w^\ast(x,\cdot), w_\ast(x,\cdot)\in\Lip_{\text{loc}}([0,\infty))$ for all $x\in\T$.
By the stability of viscosity solutions, $w^\ast$ and $w_\ast$ are a viscosity subsolution and supersolution
to
\begin{equation}\label{eq:ODE-w}
\left\{
\begin{array}{ll}
w_t(x,t)=-\frac{n-1}{w(x,t)}+f(x) & \text{for} \ t>0,\\
w(x,0)=g(x) & \text{for all} \  x\in\T.
\end{array}
\right.
\end{equation}

By Lemma \ref{lem:ODE-vw} we obtain $w^\ast(x,\cdot)\le w_\ast(x,\cdot)$ on $[0,\infty)$ for all $x\in\T$, which implies
that $w^\ast=w_\ast$ on $\T\times[0,\infty)$. Therefore,
$w^\ep\to w$ locally uniformly on $\T\times\left[0,\infty\right)$ as $\ep\to0$.
By assumption $\max_{\T}(fg)<n-1$, there exists $\overline{x}\in\T$ such that $\beta:=f(\overline{x})-\frac{n-1}{g(\overline{x})}<0$.
Then it is clear to see that $z(t):=g(\overline{x})+\beta t$ is a supersolution to \eqref{eq:ODE-w} with $x=\overline{x}$. Since $\beta<0$, there exists $T>0$ such that $z(T)=0$, which is a contradiction.
\end{proof}

\section{The locations of quenching}\label{sec:quen-place}
In this section, we consider \eqref{eq:periodic} with $\ep=1$, that is,
\begin{equation}\label{eq:ep=1}
\left\{
\begin{array}{ll}
\displaystyle
u_t=\frac{u_{xx}}{u_x^2+1}-\frac{n-1}{u}
+f(x)\sqrt{u_x^2+1} & \text{for} \ x\in \T, \ t\in(0, T^\ast), \\
u(x,0)=g(x) & \text{for} \ x\in\T.
\end{array}
\right.
\end{equation}
Again, \eqref{eq:ep=1} here is exactly \eqref{eq:MC-rot} under the periodic setting.
The assumptions of Theorem \ref{thm:3} are always in force in this section. 
By Theorem \ref{thm:1}  (b), quenching happens and $T^*<\infty$.

\begin{lem}\label{lem:v-positive1}
Assume {\rm(A1), (A2)} hold.
We have $u_x(0,t)=u_x(\frac{1}{2},t)=0$ for all $t\in[0, T^\ast)$, and
\[
u_x(x,t)>0 \quad\text{for all} \ x\in \left(0,\frac{1}{2}\right)\times(0, T^\ast).
\]
\end{lem}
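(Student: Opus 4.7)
The plan has two stages: obtain the two boundary identities at $x=0$ and $x=1/2$ from symmetry and uniqueness, then establish strict positivity of $v:=u_x$ in the interior by applying the weak and strong parabolic maximum principles to the linear equation satisfied by $v$.

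First I would exploit symmetry. Under (A2), the reflection $\tilde u(x,t):=u(1-x,t)$ again solves \eqref{eq:ep=1}: we have $g(1-x)=g(x)$, $f(1-x)=f(x)$, and $(\tilde u_x)^2=(u_x(1-x,t))^2$, so both the mean-curvature and the forcing term are preserved. Uniqueness of the classical solution then forces $u(1-x,t)=u(x,t)$ on $[0,T^\ast)$, and differentiating in $x$ gives $u_x(1-x,t)=-u_x(x,t)$. Setting $x=1/2$ yields $u_x(1/2,t)=0$ directly, while setting $x=0$ together with the periodicity $u_x(0,t)=u_x(1,t)$ yields $u_x(0,t)=0$.

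Next I would set $v:=u_x$ and differentiate \eqref{eq:ep=1} in $x$ to obtain the linear parabolic equation
\[
v_t=\frac{v_{xx}}{v^2+1}+B(x,t)\,v_x+\frac{n-1}{u^2}\,v+f'(x)\sqrt{v^2+1},
\]
with $B(x,t):=-\frac{2vv_x}{(v^2+1)^2}+\frac{f(x)v}{\sqrt{v^2+1}}$. On any subcylinder $Q_\delta:=[0,1/2]\times[0,T^\ast-\delta]$ with $\delta>0$, classical regularity and $u>0$ on the compact $Q_\delta$ ensure that $B$ and $(n-1)/u^2$ are bounded, the source $f'(x)\sqrt{v^2+1}\ge 0$ by (A2), and the parabolic-boundary data for $v$ are nonnegative (by the first stage and $g'\ge 0$ on $[0,1/2]$). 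A weak maximum principle applied to $w:=e^{-\lambda t}v$ with $\lambda>\sup_{Q_\delta}(n-1)/u^2$ then yields $v\ge 0$ on $Q_\delta$: at an interior negative minimum of $w$ one would have $v_x=0$, $v_{xx}\ge 0$, $v_t\le\lambda v$, and the PDE would force $(\lambda-(n-1)/u^2)v\ge f'(x)\sqrt{v^2+1}\ge 0$, contradicting $v<0$. Sending $\delta\to 0$ gives $v\ge 0$ on $[0,1/2]\times[0,T^\ast)$.

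To upgrade to strict positivity I would apply the strong parabolic maximum principle to the same linear equation on $Q_\delta$ (again after the $e^{-\lambda t}$ rescaling so that the zeroth-order coefficient becomes nonpositive): if $v(x_0,t_0)=0$ at some interior $(x_0,t_0)\in(0,1/2)\times(0,T^\ast-\delta]$, then $v\equiv 0$ on $[0,1/2]\times[0,t_0]$, whence $g'(x)=v(x,0)\equiv 0$ on $[0,1/2]$, contradicting $g(0)<g(1/2)$ from (A2). Hence $v>0$ on $(0,1/2)\times(0,T^\ast-\delta)$ and, $\delta$ being arbitrary, on $(0,1/2)\times(0,T^\ast)$. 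The main (purely technical) obstacle is that the coefficients of the $v$-equation depend on derivatives of $u$ which may blow up as $t\to T^\ast$; this is handled by confining every maximum-principle argument to the subcylinders $Q_\delta$ where classical regularity and a positive lower bound for $u$ are both available, and then letting $\delta\to 0$.
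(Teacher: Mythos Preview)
Your proposal is correct and follows essentially the same route as the paper: symmetry plus uniqueness to obtain $u_x(0,t)=u_x(\tfrac12,t)=0$, then differentiate the equation and apply the weak and strong parabolic maximum principles to $v=u_x$. The only cosmetic difference is that the paper, once $v\ge 0$ is known, simply drops the nonnegative terms $\tfrac{n-1}{u^2}v$ and $f'(x)\sqrt{u_x^2+1}$ to get $v_t\ge \tfrac{v_{xx}}{u_x^2+1}+B\,v_x$ and applies the strong maximum principle directly, whereas you carry all terms and use the $e^{-\lambda t}$ rescaling; both are standard and equivalent.
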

\begin{proof}
Differentiate \eqref{eq:ep=1} with respect to $x$ to yield
\[
u_{tx}=\frac{u_{xxx}}{u_x^2+1}-\frac{2u_{xx}^2u_x}{(u_x^2+1)^2}+\frac{(n-1)u_x}{u^2}+f'(x)\sqrt{u_x^2+1}+\frac{f(x)u_xu_{xx}}{\sqrt{u_x^2+1}}.
\]
Let $v=u_x$. Noting that $f'(x)\ge0$ on $[0,\frac{1}{2}]$, we have
\begin{equation}\label{ineq:u_x}
v_t\ge\frac{v_{xx}}{u_x^2+1}+\left(\frac{f(x)u_{x}}{\sqrt{u_x^2+1}}-\frac{2u_{xx}u_x}{(u_x^2+1)^2}\right)v_x+\frac{(n-1)v}{u^2}
\quad
\text{in} \ \left(0,\frac{1}{2}\right)\times(0,T^\ast).
\end{equation}
As $f(x)=f(1-x)$ and $g(x)=g(1-x)$, we see that $U(x,t):=u(1-x,t)$ is also a solution to \eqref{eq:ep=1}.
Thus, the uniqueness of solutions to \eqref{eq:ep=1} yields
\begin{equation}\label{eq:periodic-half}
u(1-x,t)=u(x,t) \quad\text{for all} \ (x,t)\in\left[0,1\right]\times[0,T^{\ast}),
\end{equation}
which, together with the fact that $u$ is $\Z$-periodic and smooth in $\R\times(0,T^\ast)$, implies
$v(0,t)=u_x(0,t)=0$ and $v(\frac{1}{2},t)=u_x(\frac{1}{2},t)=0$ for all $t \in (0,T^{\ast})$.
Thus, $v$ satisfies \eqref{ineq:u_x} and
\begin{align}
&v(0,t)=v\left(\frac{1}{2},t\right)=0
\quad
\text{for all} \ t \in (0,T^{\ast}),
\nonumber\\
&
v(x,0)=u_x(x,0)=g'(x)\ge0
\quad
\text{for all} \ x\in\left[0,\frac{1}{2}\right].
\label{cond:b-i}
\end{align}

Clearly, $\tilde{v}\equiv 0$ is a subsolution to \eqref{ineq:u_x} and \eqref{cond:b-i}.
By the comparison principle, we obtain
$u_x(x,t)=v(x,t)\ge0$ for all $(x,t)\in (0,\frac{1}{2})\times(0,T^\ast)$.
Therefore, $v$ satisfies
\[
v_t\ge\frac{v_{xx}}{u_x^2+1}+\left(\frac{f(x)u_{x}}{\sqrt{u_x^2+1}}-\frac{2u_{xx}u_x}{(u_x^2+1)^2}\right)v_x\quad
\text{in} \ \left(0,\frac{1}{2}\right)\times(0,T^\ast),
\]
which implies that, in light of the strong maximum principle,
$v>0$ in $(0,\frac{1}{2})\times(0,T^\ast)$.
\end{proof}

\medskip
We next note that $v=u_x$ also satisfies
\begin{numcases}
\displaystyle
v_t\ge \frac{v_{xx}}{(u_x)^2+1}
+\frac{f(x)vv_x}{\sqrt{(u_x)^2+1}}
-\frac{2v(v_x)^2}{((u_x)^2+1)^2}
 &
for \ $(x,t)\in (0,\frac{1}{2})\times(0, T^\ast)$, \label{v-1} \\
v(0,t)=v\left(\frac{1}{2},t\right)=0 & for \ $t\in[0,T^\ast)$, \nonumber\\
v(x,0)=g'(x)\ge0 & for $x\in[0,\frac{1}{2}]$. \nonumber
\end{numcases}

\begin{lem}\label{lem:v-positive}
Let $u$ be the solution to \eqref{eq:ep=1}, and set $v:=u_x$.
Let $a, b\in[0,\frac{1}{2}]$ with $a<b$, and $w\in C^2\big([a,b]\times[0,T^\ast]\big)$ be a subsolution to \eqref{v-1} in $(a,b)\times(0,T^\ast)$
with $w(a,t)=w(b,t)=0$, and $w(x,0)\le g'(x)$ for all $x\in[a,b]$. Then, $v\ge w$ on $[a,b]\times[0,T^\ast)$.
\end{lem}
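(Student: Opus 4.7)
The plan is a direct parabolic comparison argument for $\phi:=v-w$. Since $v=u_x$ satisfies \eqref{v-1} with ``$\ge$'' while $w$, being a subsolution to \eqref{v-1}, satisfies the reverse inequality with the same fixed coefficients depending on $u$, subtraction yields
\begin{align*}
\phi_t - \frac{\phi_{xx}}{(u_x)^2+1} \;\ge\; \frac{f(x)\bigl(vv_x - ww_x\bigr)}{\sqrt{(u_x)^2+1}} \;-\; \frac{2\bigl(v(v_x)^2 - w(w_x)^2\bigr)}{((u_x)^2+1)^2}.
\end{align*}
I would then linearize the right-hand side via the elementary identities
\begin{align*}
vv_x - ww_x &= v_x\,\phi + w\,\phi_x,\\
v(v_x)^2 - w(w_x)^2 &= (v_x)^2\,\phi + w(v_x+w_x)\,\phi_x,
\end{align*}
which turn the differential inequality into the linear form
\begin{align*}
\phi_t \;\ge\; \frac{\phi_{xx}}{(u_x)^2+1} + B(x,t)\,\phi_x + C(x,t)\,\phi \qquad \text{in } (a,b)\times(0,T^\ast),
\end{align*}
with coefficients $B,C$ expressed in terms of $f$, $u_x$, $u_{xx}$, $w$, and $w_x$ only. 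Since $u\in C^\infty(\T\times(0,T^\ast))$ and $w\in C^2([a,b]\times[0,T^\ast])$, both $B$ and $C$ are bounded on $[a,b]\times[0,T^\ast-\delta]$ for every $\delta\in(0,T^\ast)$, and the leading coefficient $1/((u_x)^2+1)$ is uniformly positive there.

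Next I would check that $\phi\ge 0$ on the parabolic boundary of $[a,b]\times[0,T^\ast-\delta]$. At $t=0$, $\phi(x,0)=g'(x)-w(x,0)\ge 0$ by hypothesis. On the lateral sides, $w(a,t)=w(b,t)=0$ by assumption, while Lemma \ref{lem:v-positive1} gives $v=u_x\ge 0$ throughout $[0,\tfrac12]\times[0,T^\ast)$, so $\phi(a,t),\phi(b,t)\ge 0$. The standard parabolic weak maximum principle for linear uniformly parabolic inequalities with bounded coefficients (with the usual substitution $\tilde\phi:=e^{-Mt}\phi$ for $M>\|C\|_\infty$ to absorb the sign of the zero-order term) then forces $\phi\ge 0$ on $[a,b]\times[0,T^\ast-\delta]$. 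Sending $\delta\to 0^+$ yields $v\ge w$ on $[a,b]\times[0,T^\ast)$, as claimed.

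The only mildly delicate step is the algebraic linearization; once the nonlinear first- and zero-order terms are split via the product-rule identities above, all genuine nonlinearity in $\phi$ disappears, the coefficients are seen to be bounded from the smoothness of $u$ and the $C^2$ regularity of $w$, and the remainder is a textbook application of the parabolic maximum principle, so I do not anticipate a substantive obstacle.
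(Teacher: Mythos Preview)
Your argument is correct and follows essentially the same route as the paper: a parabolic comparison for $v-w$ via the exponential weight $e^{-Mt}$ and the weak maximum principle on $[a,b]\times[0,T]$ with $T<T^\ast$, then letting $T\uparrow T^\ast$. The only minor technical difference is that you linearize first, so your zero-order coefficient $C$ involves $v_x=u_{xx}$ and hence depends on the solution $u$, whereas the paper subtracts directly at the interior maximum of $e^{-Kt}(w-v)$, where $\tilde v_x=\tilde w_x$ forces the factored coefficient to involve only $w_x$; this lets the paper choose $K>\max_{[a,b]\times[0,T]} f|w_x|$, a bound that is in fact uniform on $[0,T^\ast]$ since $w\in C^2$ up to $T^\ast$. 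Either bookkeeping works, and your restriction to $[0,T^\ast-\delta]$ followed by $\delta\to 0$ handles the possible blow-up of $u_{xx}$ without trouble.
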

\begin{proof}
Fix $T\in(0,T^\ast)$.
Set $\tilde{v}(x,t):=e^{-Kt}v(x,t)$, $\tilde{w}(x,t):=e^{-Kt}w(x,t)$ for all $x\in[a,b]$, $t\in[0,T]$, and $K>0$ which will be fixed later.
Then, $v$ and $w$ satisfy
\begin{align*}
&\tilde{v}_t+K\tilde{v}\ge \frac{\tilde{v}_{xx}}{(u_x)^2+1}
+\frac{e^{Kt}f(x)\tilde{v}\tilde{v}_x}{\sqrt{(u_x)^2+1}}
-\frac{2e^{2Kt}\tilde{v}(\tilde{v}_x)^2}{((u_x)^2+1)^2}
\quad\text{in} \ (a,b)\times(0, T^\ast), \\
&\tilde{w}_t+K\tilde{w}\le \frac{\tilde{w}_{xx}}{(u_x)^2+1}
+\frac{e^{Kt}f(x)\tilde{w}\tilde{w}_x}{\sqrt{(u_x)^2+1}}
-\frac{2e^{2Kt}\tilde{w}(\tilde{w}_x)^2}{((u_x)^2+1)^2}
\quad\text{in} \ (a,b)\times(0, T^\ast).
\end{align*}

Assume by contradiction that there exists $(x_0,t_0)\in (a,b)\times(0,T]$ such that
\[
\max_{[a,b]\times[0,T]}(\tilde{w}-\tilde{v})=(\tilde{w}-\tilde{v})(x_0, t_0)>0.
\]
Then,
$(\tilde{w}-\tilde{v})_t(x_0,t_0)\ge0$,
$(\tilde{w}-\tilde{v})_{x}(x_0,t_0)=0$,
$(\tilde{w}-\tilde{v})_{xx}(x_0,t_0)\le0$, which implies that
\[
K(\tilde{w}-\tilde{v})(x_0,t_0)
-\frac{e^{Kt_0}f(x_0)\tilde{w}_x(x_0,t_0)}{\sqrt{u_x^2(x_0,t_0)+1}}(\tilde{w}-\tilde{v})(x_0,t_0)
+\frac{2e^{2Kt_0}\tilde{w}_{x}^{2}(x_0,t_0)}{u_x^2(x_0,t_0)+1}(\tilde{w}-\tilde{v})(x_0,t_0)
\leq 0.
\]
Thus,
\[
K\le
\frac{e^{Kt_0}f(x_0)|\tilde{w}_x(x_0,t_0)|}{\sqrt{u_x^2(x_0,t_0)+1}}
=\frac{f(x_0)|w_x(x_0,t_0)|}{\sqrt{u_x^2(x_0,t_0)+1}}\le f(x_0)|w_x(x_0,t_0)|.
\]
Taking $K>\max_{[a,b]\times[0,T]}f(x)|w_x(x,t)|$,
we have a contradiction. Therefore, $\tilde{w}\le\tilde{v}$ on $[a,b]\times[0,T]$ for all $T\in(0,T^\ast)$,
which implies the conclusion.
\end{proof}

\begin{lem}\label{lem:w}
Let $c_0>0$, $a, b\in[0,\frac{1}{2}]$ with $a<b$, and set $w(x,t):=c_0e^{-Mt}\sin\left(\frac{\pi(x-a)}{b-a}\right)$ for $(x,t)\in[a,b]\times[0,T^\ast)$.
There exists $M>0$ such that $w$ is a subsolution to \eqref{v-1} with $w(a,t)=w(b,t)=0$ for all $t\in[0,T^\ast)$.
\end{lem}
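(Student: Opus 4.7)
\smallskip
\noindent\textbf{Proof proposal.} The plan is a direct computation: plug $w$ into \eqref{v-1} and choose $M$ large enough to absorb every error term. Let $L := b-a$ and write $S := \sin\!\left(\tfrac{\pi(x-a)}{L}\right)$, $C := \cos\!\left(\tfrac{\pi(x-a)}{L}\right)$, so that $w = c_0 e^{-Mt} S$, and compute
\[
w_t = -Mw, \qquad w_x = c_0 e^{-Mt}\tfrac{\pi}{L} C, \qquad w_{xx} = -\tfrac{\pi^2}{L^2}\,w.
\]
The boundary conditions $w(a,t)=w(b,t)=0$ are automatic since $S$ vanishes at the endpoints, and $w\ge 0$ on $[a,b]\times[0,T^\ast)$ since $S\ge 0$ on $[a,b]$.

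Plugging $w$ into \eqref{v-1}, we need
\[
-Mw \;\ge\; -\frac{\pi^2}{L^{2}}\cdot\frac{w}{u_x^2+1}\;+\;\frac{f(x)\,w\,w_x}{\sqrt{u_x^2+1}}\;-\;\frac{2w\,(w_x)^2}{(u_x^2+1)^2}
\]
on $(a,b)\times(0,T^\ast)$. Since $w\ge 0$, after moving everything to one side it suffices to prove
\[
-M \;+\; \frac{\pi^2}{L^2(u_x^2+1)}\;-\;\frac{f(x)w_x}{\sqrt{u_x^2+1}}\;+\;\frac{2(w_x)^2}{(u_x^2+1)^2}\;\le\;0
\]
wherever $w>0$; the strict equalities at the endpoints (and for $w=0$) are trivially fine.

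The key observation is that the denominators $u_x^2+1$ and $(u_x^2+1)^2$ are both $\ge 1$, so we do not need any a priori bound on $u_x$ to dominate these terms. Indeed, using $|w_x|\le c_0\pi/L$ and setting $\overline f := \max_{\T} f$, we obtain the three uniform bounds
\[
\frac{\pi^2}{L^2(u_x^2+1)}\le\frac{\pi^2}{L^2},\qquad
\left|\frac{f(x)w_x}{\sqrt{u_x^2+1}}\right|\le \overline f\cdot\frac{c_0\pi}{L},\qquad
\frac{2(w_x)^2}{(u_x^2+1)^2}\le \frac{2c_0^2\pi^2}{L^2}.
\]
Choosing
\[
M\;:=\;\frac{\pi^2}{L^2}\;+\;\frac{c_0\pi\,\overline f}{L}\;+\;\frac{2c_0^2\pi^2}{L^2}\;+\;1,
\]
the required pointwise inequality holds on all of $[a,b]\times[0,T^\ast)$, so $w$ is a subsolution to \eqref{v-1} with the desired boundary values. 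No subtlety should arise, since every term is controlled without reference to the (possibly large) derivative $u_x$; the only mild point is to notice that we truly do not need an a priori $L^\infty$ bound on $u_x$ to close the argument.
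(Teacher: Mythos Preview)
Your argument is correct and follows essentially the same direct computation as the paper: factor out $w\ge 0$, bound the remaining bracket uniformly using $u_x^2+1\ge 1$ and $|w_x|\le c_0\pi/L$, and absorb everything into $M$. One minor slip: in your first displayed inequality the sign should be $\le$ (subsolution) rather than $\ge$; your next line, which is the actual inequality you verify, is correctly stated and the rest goes through unchanged.
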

\begin{proof}
Plug $w$ into the equation to get
\begin{align*}
&w_t-\frac{w_{xx}}{(u_x)^2+1}
+\frac{f(x)ww_x}{\sqrt{(u_x)^2+1}}
-\frac{2w(w_x)^2}{((u_x)^2+1)^2}\\
=&\,
c_0e^{-Mt}\sin\left(\frac{\pi(x-a)}{b-a}\right)
\Big[
-M+\frac{\pi^2}{(b-a)^2((u_x)^2+1)}
+\frac{f(x)\pi c_{0}e^{-Mt}}{(b-a)\sqrt{(u_x)^2+1}}\cos\left(\frac{\pi(x-a)}{b-a}\right)\\
&\hspace*{7cm}
-\frac{2\pi^{2}c_{0}^{2}e^{-2Mt}}{(b-a)^2((u_x)^2+1)^2}\cos^2\left(\frac{\pi(x-a)}{b-a}\right)
\Big]\\
\le&\,
c_0e^{-Mt}\sin\left(\frac{\pi(x-a)}{b-a}\right)\left(-M+\frac{\pi^2}{(b-a)^2}+\frac{f(x)\pi c_{0}}{b-a}\right).
\end{align*}
Thus, by choosing $M>\frac{\pi^2}{(b-a)^2}+\frac{\pi c_{0}\|f\|_{\Li(\T)}}{b-a}$, we get that $w$ is a subsolution to \eqref{v-1}
satisfying $w(a,t)=w(b,t)=0$ for all $t\in[0,T^\ast)$.
\end{proof}

\begin{proof}[Proof of Theorem {\rm\ref{thm:3}}]
By Theorem \ref{thm:1} (b) and Lemma \ref{lem:v-positive1}, it is clear to see $\lim_{t\to (T^{\ast})^{-}}u(0,t)=0$.
By \eqref{eq:periodic-half}, we only need to prove $\lim_{t\to (T^{\ast})^{-}}u(x,t)>0$ for all $x\in(0,\frac{1}{2}]$.

Let $t_0:=\frac{T^\ast}{2}$.
By Lemma \ref{lem:v-positive1}, we obtain that $v(x,t_0)=u_x(x,t_0)>0$ for all $x\in(0,\frac{1}{2})$.
Take any $0<a<a^{\ast}<b^{\ast}<b<\frac{1}{2}$. There exists $c_0>0$ such that
\[
v(x,t_0)=u_x(x,t_0)\ge c_0\ge c_0\sin\left(\frac{\pi(x-a)}{b-a}\right)
\quad\text{for all} \ x\in[a,b].
\]
In light of Lemma \ref{lem:w}, for all $t\in[t_0,T^\ast)$,
\[
u_x(x,t)\ge c_0e^{-M(t-t_0)}\sin\left(\frac{\pi(x-a)}{b-a}\right)
\quad\text{for all} \ x\in[a,b].
\]
Thus, for all $x\in[a^{\ast},b^{\ast}]$, we have
\[
u(x,t)-u(a,t)
\ge c_0e^{-\frac{MT^\ast}{2}}
\int_{a}^{a^{\ast}}\sin\left(\frac{\pi(y-a)}{b-a}\right)\, dy
=:\alpha>0,
\]
which implies
\[
\liminf_{t\to (T^{\ast})^{-}}u(x,t)\ge \limsup_{t\to (T^{\ast})^{-}}u(a,t)+\alpha\ge\alpha>0
\quad\text{for all} \ x\in[a^{\ast},b^{\ast}].
\]
We use Corollary \ref{cor:apriori} to get $u\in\Lip([a^{\ast},b^{\ast}]\times[0,T^\ast])$, which implies $\lim_{t\to (T^{\ast})^{-}}u(x,t)$ exists for all $x\in[a^{\ast},b^{\ast}]$.
By the arbitrariness of $a,b,a^{\ast},b^{\ast}$, we see that $\lim_{t\to (T^{\ast})^{-}}u(x,t)>0$ for all $x\in (0,\frac{1}{2}]$.
\end{proof}



\begin{thebibliography}{10}

\bibitem{AW}
A. Acker, W. Walter,
\emph{The quenching problem for nonlinear parabolic differential equations},
Ordinary and partial differential equations (Proc. Fourth Conf., Univ. Dundee, Dundee, 1976), pp. 1--12, Lecture Notes in Math., Vol. 564, Springer, Berlin-New York, 1976.

\bibitem{AAG}
S. Altschuler, S. B. Angenent, Y. Giga,
\emph{Mean curvature flow through singularities for surfaces of rotation},
J. Geom. Anal. 5 (1995), no. 3, 293--358.

\bibitem{An}
S. B. Angenent,
\emph{Shrinking doughnuts}, In: Nonlinear diffusion equations and their equilibrium states, 3, Vol. 7, Birkh\"{a}user, Boston, MA, 1992, 21--38.

%
%

%

\bibitem{DZ}
Q. Dai, Quiyi, X. Zeng,
\emph{The quenching phenomena for the Cauchy problem of semilinear parabolic equations},
J. Differential Equations 175 (2001), no. 1, 163--174.


\bibitem{DK}
G. Dziuk, B. Kawohl,
\emph{On rotationally symmetric mean curvature flow}, J. Differential Equations 93 (1991), 142--149.

\bibitem{EM}
J. Escher, B.-V. Matioc,
\emph{Neck pinching for periodic mean curvature flows},
Analysis (Munich) 30 (2010), no. 3, 253--260.

\bibitem{FKL}
M. Fila, B. Kawohl, H. A. Levine,
\emph{Quenching for quasilinear equations},
Comm. Partial Differential Equations 17 (1992), no. 3-4, 593--614.

\bibitem{GS}
Z. Gang, I. M. Sigal,
\emph{Neck pinching dynamics under mean curvature flow},
J. Geom. Anal. 19 (2009), no. 1, 36--80.

\bibitem{GBook}
Y.~Giga.
\newblock {Surface evolution equations: A level set approach}, volume~99 of {\em Monographs in
  Mathematics}.
\newblock Birkh\"auser Verlag, Basel, 2006.


\bibitem{GSU1}
Y. Giga, Y. Seki, N. Umeda,
\emph{Mean curvature flow closes open ends of noncompact surfaces of rotation},
Comm. Partial Differential Equations 34 (2009), no. 10-12, 1508--1529.

\bibitem{GSU2}
Y. Giga, Y. Seki, N. Umeda,
\emph{On decay rate of quenching profile at space infinity for axisymmetric mean curvature flow},
Discrete Contin. Dyn. Syst. 29 (2011), no. 4, 1463--1470.

\bibitem{GMT}
Y. Giga, H. Mitake, H. V. Tran, 
\emph{On asymptotic speed of solutions to level-set mean curvature flow equations with driving and source terms},
SIAM J. Math. Anal. 48 (2016), no. 5, 3515--3546. 

\bibitem{Gr}
M. A. Grayson,
\emph{A short note on the evolution of a surface by its mean curvature},
Duke Math. J. 58 (1989), no. 3, 555--558.

\bibitem{JKMT}
J. Jang, D. Kwon, H. Mitake, H. V. Tran, 
\emph{Level-set forced mean curvature flow with the Neumann boundary condition}, 
J. Math. Pures Appl. (9) 168 (2022), 143--167.

\bibitem{K}
H. Kawarada,
\emph{On solutions of initial boundary problem for $u_{t}=u_{xx}+1/(1-u)$},
RIMS Kyoto Univ. 10 (1975), 729--736.

\bibitem{M}
T. Miura,
\emph{An example of a mean-convex mean curvature flow developing infinitely many singular epochs},
J. Geom. Anal. 26 (2016), no. 4, 3019--3026.

\bibitem{SS}
H. M. Soner, P. E. Souganidis,
\emph{Singularities and uniqueness of cylindrically symmetric surfaces moving by mean curvature},
Comm. Partial Differential Equations 18 (1993), no. 5-6, 859--894. 

\bibitem{Su}
Q. Sun, in preparation. 

\bibitem{Hung-book}
H. V. Tran, 
{Hamilton--Jacobi equations: Theory and Applications}, 
Graduate Studies in Mathematics, 213. American Mathematical Society, Providence, RI, (2021). 

\end{thebibliography}


\bibliographystyle{abbrv}


\end{document}